\def\i{\sqrt{-1}}
\def\Tr{\mathrm{tr}}
\def\Vol{\mathrm{Vol}}
\def\rank{\mathrm{rank}}
\def\rk{\mathrm{rk}}
\def\ddbar{\sqrt{-1}\partial\bar{\partial}}
\def\L{\sqrt{-1}\Lambda_\omega\bar{\partial}\partial}
\def\dvol{\frac{\omega^b}{b!}}
\def\Ker{\mathrm{Ker}}
\def\Todd{\mathrm{Todd}}
\def\Id{\mathrm{Id}}
\def\Scal{\mathrm{scal}}
\def\End{\mathrm{End}}
\def\Met{\mathrm{Met}}
\def\rk{\mathrm{rk}}
\def\PP{\mathbb{P}}
\def\Aut{\mathrm{Aut}}
\def\ch{\mathrm{ch}}
\newtheorem{theorem}{Theorem}
\newtheorem{lemma}[theorem]{Lemma}
\newtheorem{corollary}[theorem]{Corollary}
\newtheorem{proposition}[theorem]{Proposition}
\theoremstyle{definition}
\newtheorem{definition}[theorem]{Definition}
\newtheorem{remark}[theorem]{Remark}
\title{A note on Chow stability of the Projectivisation of Gieseker Stable Bundles}
\author{Julien Keller and Julius Ross}
\begin{document}
\maketitle

\begin{abstract}
We investigate Chow stability of projective bundles $\mathbb P(E)$ where $E$ is a strictly Gieseker stable bundle over a base manifold that has constant scalar curvature.   We show that, for suitable polarisations $\mathcal L$, the pair $(\mathbb P(E),\mathcal L)$ is Chow stable and give examples for which it is not asymptotically Chow stable.
\end{abstract}

\section{Introduction}

Our aim is to investigate the connection between stability of a vector bundle $E$ and stability of the projective bundle $\PP(E)$ as a polarised manifold.   Roughly speaking one expects that $\PP(E)$ is stable, with respect to polarisations that make the fibres sufficiently small, if and only if $E$ is a stable vector bundle over a base that is stable as a manifold.

The first result along these lines is due to I.\ Morrison \cite{M} who showed that if $E$ is a stable rank $2$ bundle on a smooth Riemann surface $B$ then the ruled surface $\pi\colon \PP(E)\to B$ is Chow stable with respect to the polarisation $L_k=\mathcal O_{\PP(E)}(1)\otimes\pi^* \mathcal O_B(k)$ for $k\gg 0$.   Later, building on the work of E.\ Calabi, A.\ Fujiki, C.\ Lebrun and many others, V.\ Apostolov, D.\ Calderbank, P.\ Gauduchon and C.\ T\o nnesen-Friedman have provided a complete understanding of the situation for higher rank bundles over a smooth Riemann surface.   They show there is a constant scalar curvature K\"ahler metric (cscK in short) in any K\"ahler class on $\PP(E)$ if and only if the bundle $E$ is Mumford polystable \cite{ACGT,ACGT2,AT1}.  Such metrics are related to stability through the Yau-Tian-Donaldson conjecture (see, for example, \cite{RT} for an account).  In particular it implies through work of S.K.\ Donaldson \cite{D1} that $\PP(E)$ is asymptotically Chow stability, by which one means that if $r$ is sufficiently large then the embedding of $\PP(E)$ into projective space using the linear series determined by $L_k^r$ is Chow stable (see also A.\ Della Vedova and F.\ Zuddas \cite{DV1} for a generalisation).

There are at least two extensions to the case when the base $B$ has higher dimension.  First, a result of Y.-J.\ Hong \cite{H} states if $E$ is a Mumford-stable bundle of any rank over a smooth base $B$ that has a discrete automorphism group and a cscK metric, then $\PP(E)$ will also admit a cscK metric, again making the fibres small.  (Once again, from \cite{D1},  this implies that $\PP(E)$ is asymptotically Chow stable.)  Second, a result of R.\ Seyyedali states that in fact under these conditions, $\PP(E)$ is Chow stable with respect to $L_k$ for $k\gg 0$, the novelty here being that stability is not taken asymptotically, which implies Morrison's result. \smallskip

The purpose of this note is to relax the assumption that $E$ is Mumford stable and instead consider bundles that are merely Gieseker stable.  To state the theorems precisely, let $B$ be a smooth polarised manifold carrying an ample line bundle $L$ such that the automorphism group $\Aut(B,L)/\mathbb C^*$ is discrete.   The projective bundle $\pi\colon \mathbb P(E)\to B$ carries a tautological bundle $\mathcal{O}_{\mathbb{P}E}(1)$, and the line bundle 
$$\mathcal{L}_k:=\mathcal{O}_{\mathbb{P}E}(1)\otimes \pi ^* L ^{k}$$
is ample for $k$ sufficiently large.

\begin{theorem}\label{thm:mainstable}
Suppose that $E$ is Gieseker stable and its Jordan-H\"older filtration is given by subbundles, and assume there is a constant scalar curvature K\"ahler metric in the class $c_1(L)$.  Then $(\mathbb P(E),\mathcal L_k)$ is Chow stable for $k$ sufficiently large.
\end{theorem}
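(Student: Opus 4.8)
The plan is to establish Chow stability by producing a \emph{balanced} embedding of $\mathbb P(E)$ into $\mathbb P(H^0(\mathcal L_k)^*)$ and then invoking the theorem of Zhang, Luo and Phong--Sturm that a polarised subvariety admitting a balanced embedding with finite stabiliser is Chow stable. The starting observation is that $H^0(\mathbb P(E),\mathcal L_k)\cong H^0(B,E\otimes L^k)$, so a Fubini--Study type metric on $\mathcal L_k$ is determined fibrewise by a Hermitian metric $h$ on $E\otimes L^k$ together with the fixed cscK metric $\omega_B\in c_1(L)$ on the base. The Bergman function (density of states) of such a metric governs the balancing condition, and its pointwise constancy is exactly what must be arranged.

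The key input, replacing the Hermitian--Einstein metric used in Seyyedali's treatment of the Mumford stable case, is Wang's correspondence: because $E$ is Gieseker stable, for $k\gg 0$ the bundle $E\otimes L^k$ carries a balanced metric $h_k$ in Wang's sense. I would build an approximately balanced metric on $\mathbb P(E)$ from this $h_k$ and $\omega_B$, and then expand the Bergman function of $(\mathbb P(E),\mathcal L_k)$ asymptotically in powers of $1/k$ in the adiabatic limit where the fibres are small. The leading obstruction to constancy splits into a base contribution, governed by the scalar curvature of $\omega_B$ and annihilated by the cscK assumption, and a fibre contribution, governed precisely by the defect measured by Wang's balanced condition and hence also made to vanish. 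This produces a metric whose Bergman function is constant up to an error that is controlled as $k\to\infty$.

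To upgrade the approximate solution to an exact balanced metric I would apply an implicit function theorem on the finite-dimensional space of Fubini--Study metrics at level $k$. The kernel of the linearised balancing operator is the Lie algebra of the stabiliser of the embedding, assembled from holomorphic vector fields on $\mathbb P(E)$; since $\Aut(B,L)/\mathbb C^*$ is discrete and, by Gieseker stability, $E$ is simple so that $\Aut(E)=\mathbb C^*$, this stabiliser is finite and the operator becomes invertible after the usual normalisation. Self-adjointness then gives surjectivity, and the perturbation closes, yielding the balanced embedding and with it Chow stability of $(\mathbb P(E),\mathcal L_k)$.

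The main obstacle is obtaining bounds on the inverse of the linearised operator that are uniform in $k$. When $E$ is strictly Gieseker stable it is only Mumford semistable, its Jordan--H\"older filtration is nontrivial, and the graded pieces all share the slope of $E$; the hypothesis that this filtration is by subbundles is what lets one place Hermitian--Einstein metrics on the (Mumford stable) graded pieces and assemble Wang's metric $h_k$ from them with controlled asymptotics. The difficulty is that the coincidence of slopes means the spectral gap of the linearised operator separating the filtration directions can degenerate as $k\to\infty$, so the error estimates coming from the asymptotic expansion must be shown to beat this degeneration. Controlling this interplay between the filtration and the adiabatic $1/k$ expansion is the heart of the argument and the principal point at which the Gieseker case is genuinely harder than the Mumford case treated by Seyyedali.
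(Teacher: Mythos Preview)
Your high-level outline---build approximately balanced metrics on $\mathbb P(E)$, close up via an implicit function argument exploiting simplicity of $E$ and discreteness of $\Aut(B,L)/\mathbb C^*$, then invoke Zhang--Luo--Phong--Sturm---matches the paper. But the key analytic input you have chosen is not the one the paper uses, and the substitution creates a real gap.

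You propose to feed in Wang's \emph{balanced} metrics $h_k$ on $E\otimes L^k$. The paper instead uses Leung's \emph{almost Hermitian--Einstein} metrics $H_k$ on $E$: a sequence solving $[e^{F_{H_k}+k\omega}\Todd(B)]^{(b,b)}=\tfrac{\chi(E\otimes L^k)}{\rk E}\Id_E\,\omega^b/b!$ with $F_{H_k}$ bounded in every $C^r$ norm uniformly in $k$. This uniform curvature control is the point. The argument requires a uniform density-of-states expansion for the (distorted) Bergman endomorphism as the metric on $E$ varies with $k$, and that expansion is only available when the curvatures $F_{H_k}$ stay bounded. Wang's theorem produces a balanced $h_k$ at each level but says nothing about uniform $C^r$ bounds on its curvature; without them you cannot run the perturbation scheme or the implicit function step with errors that are genuinely $O(k^{-N})$. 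Your sentence about ``assembling Wang's metric $h_k$'' from Hermitian--Einstein metrics on the graded pieces describes Leung's construction, not Wang's, and it is precisely in Leung's construction that the hypothesis of a Jordan--H\"older filtration by subbundles is used.

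Two further points where the proposal diverges from the paper. First, the paper perturbs \emph{both} the metric on $E$ and the metric on $L$ (so $\omega$ moves by $\sqrt{-1}\partial\bar\partial\sum k^{-i}\phi_i$), solving at each order a coupled system involving the Lichnerowicz operator on $B$ and an endomorphism equation on $E$; you keep $\omega_B$ fixed, which does not suffice past the leading term. Second, the ``main obstacle'' is not a degenerating spectral gap coming from the filtration. The actual issue is that the linearised endomorphism equation is $\sqrt{-1}\Lambda_\omega\bar\partial\partial\,\Phi-[\tilde R\,\Phi]^0=\zeta$ with $\tilde R=\sqrt{-1}\Lambda_\omega F_{\End(E),H_k}-R_0=O_{C^r}(1/k)$; one shows its kernel on trace-zero endomorphisms is trivial for $k$ large using a Poincar\'e-type inequality and simplicity of $E$, and a small K\"ahler-identity trick (subtracting $[\tilde R\,\Phi]^0$ rather than $[R\,\Phi]^0$) is needed to guarantee the solution $\Phi$ is hermitian with respect to $H_k$. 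The Jordan--H\"older filtration does not reappear in this step; its role is entirely upstream, in Leung's existence theorem for the $H_k$.
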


\begin{theorem}\label{notasymptoticchowstable}
  Suppose that $E$ is a rank 2 bundle over a surface and $F$ is a subbundle of $E$ such that $E/F$ is locally free.  Suppose furthermore $\mu(F)=\mu(E)$ and
\[ 4\, \left(\ch_2(E)/2- \ch_2 (F) \right) + c_{1}(B)\left( c_1(E)/2-c_{1}(F)  \right)<0, \]
where $\ch_2$ denotes the degree $2$ term in the Chern character. Then for $k$ sufficiently large, $(\mathbb{P}(E),\mathcal{L}_k)$ is not K-semistable and thus not asymptotic Chow stable.
\end{theorem}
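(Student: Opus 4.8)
The plan is to exhibit an explicit destabilising test configuration for $(\PP(E),\mathcal L_k)$ built from the subbundle $F$, and to show that its Donaldson--Futaki invariant becomes strictly negative for $k\gg0$ precisely under the stated inequality. Write $Q=E/F$, which is a line bundle since $E$ has rank $2$, $F$ is a subbundle, and $E/F$ is locally free. The extension $0\to F\to E\to Q\to 0$ is classified by a class $e\in\mathrm{Ext}^1(Q,F)$, and scaling $e$ by $t\in\mathbb C^*$ produces a family $E_t$ of bundles with $E_1\cong E$ and central fibre $E_0\cong F\oplus Q$. The associated family $\PP(\mathcal E)\to B\times\mathbb C$ of projective bundles, together with the $\mathbb C^*$-action induced by the scaling, is a nontrivial test configuration for $(\PP(E),\mathcal L_k)$ whose central fibre is $\PP(F\oplus Q)$ equipped with the fibrewise $\mathbb C^*$-action acting with distinct weights on the two summands. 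First I would verify this is a genuine test configuration: flatness over the base, and relative ampleness of the natural extension of $\mathcal L_k$ for $k\gg0$ (which holds since $\mathcal O_{\PP(E)}(1)$ is relatively ample and $L$ is ample on $B$).

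Since the Donaldson--Futaki invariant depends only on the polarised central fibre with its $\mathbb C^*$-action, the next step is to compute it on $\PP(F\oplus Q)$. Normalising the linearisation so that $\mathbb C^*$ acts with weight $1$ on $F$ and weight $0$ on $Q$, one has
\[
H^0\!\big(\PP(F\oplus Q),\mathcal L_k^{\,r}\big)=\bigoplus_{i+j=r}H^0\!\big(B,\;F^{i}\otimes Q^{j}\otimes L^{kr}\big),
\]
with the summand indexed by $(i,j)$ carrying weight $i$. For $k$ and $r$ large all higher cohomology vanishes, so the dimension $d_k(r)$ and total weight $w_k(r)$ are obtained by applying Riemann--Roch on the surface $B$ to each line bundle $F^{i}Q^{\,r-i}L^{kr}$ and summing the resulting polynomial in $i$ over $0\le i\le r$ with the standard power-sum formulae. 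This produces $d_k(r)$ and $w_k(r)$ as explicit polynomials of degrees $3$ and $4$ in $r$, and the Donaldson--Futaki invariant is read off from the subleading behaviour of $w_k(r)/d_k(r)$ in the usual way (equivalently, one may compute it as an intersection number on the compactified total space $\PP(\mathcal E)\to B\times\mathbb P^1$).

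The key point is the interaction of the two expansions. The leading coefficient in $k$ of the Donaldson--Futaki invariant is governed by the slope difference $\mu(F)-\mu(E)$, which vanishes by hypothesis; consequently the sign is decided by the next coefficient, and a direct calculation identifies this, up to a fixed positive multiple and a positive power of $k$, with the displayed quantity
\[
4\big(\ch_2(E)/2-\ch_2(F)\big)+c_1(B)\big(c_1(E)/2-c_1(F)\big).
\]
Thus, under the stated inequality, the Donaldson--Futaki invariant of our test configuration is strictly negative for all $k\gg0$, so $(\PP(E),\mathcal L_k)$ is not K-semistable. Finally, since asymptotic Chow semistability implies K-semistability (the leading term in $r$ of the Chow weight of $\mathcal L_k^{\,r}$ is a positive multiple of the Donaldson--Futaki invariant), it follows that $(\PP(E),\mathcal L_k)$ is not asymptotically Chow stable.

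The main obstacle is the two-parameter asymptotic bookkeeping in the middle step: one must expand $w_k(r)/d_k(r)$ to subleading order in $r$ to extract the Donaldson--Futaki invariant, and then expand the result to the correct order in $k$, tracking carefully the Riemann--Roch contributions from $c_1(B)$ and from $\ch_2$. It is exactly here that the hypothesis $\mu(F)=\mu(E)$ is used, to cancel the Mumford-type leading term and promote the finer Gieseker-type combination above to the decisive one; pinning down its precise coefficients, together with the overall sign fixed by the orientation of the $\mathbb C^*$-action, is the delicate part of the argument.
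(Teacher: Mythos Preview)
Your proposal is correct and follows essentially the same approach as the paper: you build the identical test configuration by degenerating the extension $0\to F\to E\to Q\to 0$ to $F\oplus Q$, compute the Donaldson--Futaki invariant of the central fibre $\PP(F\oplus Q)$ via Riemann--Roch on $B$ applied to the weight decomposition $\bigoplus_i H^0(B,F^iQ^{r-i}L^{kr})$, and then expand in $k$ to see that the leading slope term vanishes under $\mu(F)=\mu(E)$, leaving precisely the displayed combination of $\ch_2$ and $c_1(B)$ as the decisive coefficient. The paper carries out this computation explicitly (obtaining $F_1=C_1k^{2b-1}+C_2k^{2b-2}+O(k^{2b-3})$ with $C_1$ proportional to $\mu(E)-\mu(F)$ and $C_2$ the stated expression up to a positive constant), and invokes the same implication from K-instability to asymptotic Chow instability.
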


These theorems should be compared to an observation of  D.\ Mumford that a quartic cuspidal plane curve is Chow stable (as a plane curve), but not asymptotically Chow stable \cite[Section 3]{Mum}.  We consider the results here noteworthy insofar as it gives smooth examples of the same nature (see Section \ref{ex}).\smallskip

When $E$ is Mumford stable Theorem \ref{thm:mainstable} is due to Seyyedali \cite{S} which in turn builds on work of Donaldson \cite{D1}.   Our proof will be along the same lines,  the main innovation being to replace the Hermitian-Einstein metrics used by  Seyyedali with the almost Hermitian-Einstein metrics on a Gieseker stable bundle furnished by N.C.\ Leung \cite{L}. Under the above assumptions, we construct a sequence of metrics that are balanced i.e make the Bergman function for $(\PP(E),\mathcal{L}_k)$ constant. The proof of Theorem \ref{notasymptoticchowstable} consists of a calculation of the Futaki invariant similar to that of Ross-Thomas \cite{RT}.\smallskip

\noindent {\bf Acknowledgements}: This research was supported by a Marie Curie International Reintegration Grant within the 7\textsuperscript{th} European Community Framework Programme and the  French Agence Nationale de la Recherche - ANR project MNGNK (ANR-10-BLAN-0118).\\

\noindent {\bf Conventions}: If $\pi\colon E\to B$ is a vector bundle then $\pi\colon \PP(E)\to B$ shall denote the space of complex \emph{hyperplanes} in the fibres of $E$.  Thus $\pi_* \mathcal O_{\mathbb P(E)}(r) = S^r E$ for $r\ge 0$.

\section{Preliminaries}

Before discussing almost Hermitian Einstein metrics we recall some basic definitions.  Let $(B,L)$ be a polarised manifold with $b=\dim B$ and $E\to B$ a vector bundle.  We say that $E$ is \emph{Mumford stable} if for all proper coherent subsheaves $F\subset E$ $$ \mu(F)< \mu(E)$$
where the slope $\mu(F)=\mu_L(F)=\deg_L F/\rk(F)$ is the quotient of the degree of $F$ (with respect to $L$) by its rank $\rk(F)$.  We say it is \emph{Mumford semistable} if the same condition holds but with non-strict inequality.  Finally $E$ is \emph{Mumford polystable} if it is the direct sum of Mumford stable bundles whose factors all have the same slope.  

Any Mumford semi-stable bundle $E$ has a Jordan-H\"older filtration $0= F_0\subset F_1 \subset \cdots \subset F_n=E$ by torsion free subsheaves such that the quotients $F_i/F_{i+1}$ are Mumford stable with $\mu(E)=\mu(F_i/F_{i+1})$.  We say that $E$ has a \emph{Jordan-H\"older filtration given by subbundles} if it is Mumford semistable and all the quotients $F_i/F_{i+1}$ are locally free (see \cite[Theorem 3]{L}). \smallskip 

We say that $E$ is \emph{Gieseker stable} if for all proper coherent subsheaves $F\subset E$ one has the following inequality for the normalised Hilbert polynomials
$$\frac{\chi(F\otimes L^k)}{\rk(F)} < \frac{\chi(E\otimes L^k)}{\rk(E)} \quad \text{ for } k\gg 0,$$
and \emph{Gieseker semistability, Gieseker polystability} is defined analogously.  It is known that if $E$ is Gieseker stable then it is simple \cite{K}, which means that $\Ker(\bar{\partial})=\Ker(\partial)=\mathbb{C}\Id_E$ \cite{K,L-T}.

These stability notions are related; using that $\mu_L(F)$ is the leading order term in $k$ of $\chi(F\otimes L^k)/\rk(F)$ one sees immediately that 

 \vspace{-3mm}\begin{center}
  \parbox[c]{2cm}{\vspace{-3mm}\center{ Mumford stable}} $\Rightarrow$ \parbox[c]{2cm}{\vspace{-3mm} \center{Gieseker stable}} $\Rightarrow$ \parbox[c]{2cm}{\vspace{-3mm}\center{Gieseker semistable}} $\Rightarrow$ \parbox[c]{2cm}{\vspace{-3mm}\center{Mumford semistable}}.
\end{center}

\subsection{Almost Hermitian-Einstein metrics and Gieseker stability}

Now suppose $L$ is equipped with a smooth hermitian metric $h_L$ with curvature $\omega:=c_1(h_L)>0$.  

\begin{definition}
  We say that a sequence of hermitian metrics $H_k$ on $E$ is \emph{almost Hermitian-Einstein} if for each $r\ge 0$ the curvature $F_{H_k}$ is bounded in the $C^r$-norm uniformly with respect to $k$, and furthermore
\begin{equation}\label{eq1Leung}
[e^ {F_{H_k}+k\omega \Id_E}\Todd(B)]^{(b,b)}=\frac{\chi(E\otimes L^k)}{\rk(E)} \Id_E \frac{\omega^ b}{b!}.
\end{equation}
\end{definition}
In the above the $(b,b)$ indicates taking the top order forms on the left hand since, and $\Todd(B)=1+c_1(B)+\frac{1}{2}(c_1(B)^2+c_2(B))+\cdots$ is the harmonic representative of the  Todd class with respect to $\omega$.  

By a simple rearrangement this condition implies
\begin{equation}\label{expansionRi}
\sqrt{-1}\Lambda_{\omega} F_{H_k} - \mu(E) \Id_E = T_0 + k^{-1} T_1 + \cdots
\end{equation}
where $T_i\in C^{\infty}(\End(E))$ are hermitian quantities depending on $F_{H_k}$ and $\omega$ that are bounded uniformly over $k$ in the $C^r$-norm.   Moreover we can arrange so
\begin{equation}\label{eq:r0}
  T_0 = -\frac{\Scal(\omega)}{2}\Id_E.
\end{equation}
where $\Scal(\omega)$ is the scalar curvature of $\omega$.

\begin{remark}
By the $C^r$-norm above we mean the sum of the supremum norms of the first $r$ derivatives taken using the pointwise operator norm with respect to a background metric on the bundle in question (that should be fixed once and for all).  From now on we shall write $O_{C^r}(k^i)$ to mean a sum of terms bounded in the $C^r$-norm by $Ck^i$ for some constant $C$.  Thus, in the above, $T_i = O_{C^r}(k^0)=O_{C^r}(1)$.
\end{remark}

In \cite{L}, Leung proved a Kobayashi-Hitchin type correspondence for  Gies\-eker stable vector bundles.

\begin{theorem}[Leung]
Assume that the Jordan-H\"older filtration of $E$ is given by subbundles.     Then $E$ is Gieseker stable if and only if $E$ admits a sequence of almost Hermitian-Einstein metrics for $k\gg 0$.
\end{theorem}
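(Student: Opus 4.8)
The plan is to prove both implications, treating the direction ``existence $\Rightarrow$ Gieseker stability'' as a Chern--Weil computation and reserving the main analytic effort for the converse.

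For the direction that a sequence of almost Hermitian--Einstein metrics $H_k$ forces Gieseker stability, I would argue as in the classical Kobayashi--Hitchin correspondence. Given a proper coherent subsheaf $F\subset E$, I replace it by its saturation so that $F$ is a subbundle away from an analytic set of codimension at least two, and equip it with the metric induced by $H_k$. Letting $\beta$ denote the associated second fundamental form, the Gauss equation expresses the curvature of $F$ as the restriction of $F_{H_k}$ corrected by a term quadratic in $\beta$. Taking the trace of \eqref{eq1Leung} over $F$, integrating over $B$, and using the Riemann--Roch theorem to identify the integral of the restricted curvature expression with $\chi(F\otimes L^k)$, one obtains for $k\gg 0$ an inequality of the shape
\[
\frac{\chi(F\otimes L^k)}{\rk(F)} \le \frac{\chi(E\otimes L^k)}{\rk(E)} - c\int_B |\beta|^2\,\frac{\omega^b}{b!},\qquad c>0.
\]
Since $E$ is Gieseker stable it is simple, so it has no proper holomorphic direct summand and the second fundamental form of a proper saturated subsheaf cannot vanish identically; the integral is therefore strictly positive and the normalised Hilbert polynomial inequality defining Gieseker stability follows.

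The substantial direction is to construct the metrics from Gieseker stability, and here I would proceed by deformation from the associated graded. As $E$ is Gieseker stable it is Mumford semistable, and by hypothesis its Jordan--H\"older filtration $0=F_0\subset\cdots\subset F_n=E$ has locally free quotients $Q_i=F_i/F_{i-1}$ which are Mumford stable of slope $\mu(E)$. By the Donaldson--Uhlenbeck--Yau theorem each $Q_i$ carries a Hermitian--Einstein metric, and their direct sum equips $\mathrm{Gr}(E)=\bigoplus_i Q_i$ with a metric solving the leading order term \eqref{eq:r0} of \eqref{expansionRi}. I would then transport this metric to $E$ using a $C^\infty$ splitting of the filtration and solve \eqref{eq1Leung} by successive approximation in powers of $k^{-1}$: writing $H_k=H_k^{(0)}\,e^{k^{-1}s_1+k^{-2}s_2+\cdots}$, one cancels the error terms $T_i$ order by order, producing at each stage an approximate solution whose residual is $O_{C^r}(k^{-N})$ for any prescribed $N$.

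The heart of the proof, and the main obstacle, is to upgrade this approximate solution to an exact one by a Newton-type iteration, which demands uniform control as $k\to\infty$ of the linearisation of the almost Hermitian--Einstein operator. That linearisation is a Laplace-type operator whose kernel at the graded metric is precisely the space of holomorphic endomorphisms of $\mathrm{Gr}(E)$; because the $Q_i$ are stable this kernel is \emph{non-trivial}, reflecting the fact that $\mathrm{Gr}(E)$ has a positive-dimensional automorphism group even though $E$ itself is simple. The difficulty, which is exactly the feature distinguishing the Gieseker setting from the Mumford setting treated classically, is to show that the extension data relating $E$ to $\mathrm{Gr}(E)$ forces these would-be zero modes to acquire eigenvalues bounded below by a fixed negative power of $k$. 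Establishing such a quantitative spectral gap, uniformly in $k$, and verifying that it decays slowly enough to dominate the $O_{C^r}(k^{-N})$ residuals, is what allows the contraction-mapping argument to close and yields the desired sequence $H_k$ for $k\gg 0$.
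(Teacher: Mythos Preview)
The paper does not prove this theorem at all: it is quoted as a result of Leung \cite{L} and used as a black box. There is therefore nothing in the paper to compare your argument against, and strictly speaking no ``proof attempt'' is required of you here.

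That said, your sketch of the easy direction contains a genuine circularity. In the paragraph proving that almost Hermitian--Einstein metrics force Gieseker stability you write ``Since $E$ is Gieseker stable it is simple, so \ldots\ the second fundamental form of a proper saturated subsheaf cannot vanish identically.'' But Gieseker stability is precisely what you are trying to establish, so you cannot invoke it to rule out $\beta\equiv 0$. If $\beta\equiv 0$ then $F$ is a holomorphic direct summand and your inequality degenerates to an equality, yielding only Gieseker \emph{polystability}. To obtain strict stability one must either impose an irreducibility hypothesis on the sequence $H_k$ (this is what Leung does) or else accept that the unadorned statement really characterises polystability rather than stability.

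Your outline of the hard direction---deforming from Hermitian--Einstein metrics on the graded object and controlling the near-kernel coming from $\Aut(\mathrm{Gr}(E))$ via the extension data---is a fair summary of the strategy in \cite{L}, though of course the quantitative spectral gap you allude to is the entire content of that paper and not something one can simply assert.
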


For simplicity we package together the following assumption: \\

\hspace{-2cm}\begin{minipage}[t]{13cm}
\begin{tabular}{p{3cm}p{10cm}}
  \begin{equation} \label{H}\tag{$\mathcal{A}$}
  \end{equation}&
  Let $E$ be an  Gieseker stable holomorphic vector bundle of rank $\rk(E)$ whose Jordan-H\"older filtration is given by subbundles. 
\end{tabular}
\end{minipage}


From now on we will also assume that $E$ is not Mumford stable, otherwise our results are direct consequences of \cite{S,W1,W2}.

\subsection{Balanced metrics\label{BalmetricSection}} 

Suppose now in addition to our metric $h_L$ on $L$ we also have a smooth Hermitian metric $H$ on $E$.  These induce a hermitian metric $H\otimes h_L^k$ on $E\otimes L^k$ which determines an $L^2$-inner product on the space of smooth sections $C^{\infty}(E\otimes L^k)$ given by
$$ ||s||_{L^2}^2 = \int_B |s|_{H\otimes h_L^k}^2 \frac{\omega^b}{b!}.$$
Associated to this data there is a projection operator $P_k\colon C^ {\infty}(B,E\otimes L^k ) \rightarrow H^0(B,E\otimes L^ k)$ onto the space of holomorphic sections for each $k$.  The \emph{Bergman kernel} is defined to be the kernel of this operator which satisfies
$$P_k(f)(x)=\int_B B_k(x,y)f(y)\frac{\omega^b_y}{b!} \quad \text{ for all } f\in C^{\infty}(E\otimes L^k),$$ (see \cite[Section 4]{W1}).

We wish to consider the Bergman kernel restricted to the diagonal, which by abuse of notation we write as $B_k(x) = B_k(x,x)$.  Thus $B_k(x)$ lies in $C^{\infty}(\End(E))$ which we shall refer to as the \emph{Bergman endomorphism} for $E\otimes L^k$, which of course depends on the data $(H\otimes h_L^k,\omega^b/b!)$. We denote by $\Met(E)$ the set of smooth hermitian metrics on the bundle $E$.

\begin{definition}
  We say that the metric $H\in \Met(E)$ is \emph{balanced} at level $k$ if the Bergman endomorphism $B_k(H\otimes h_L^k,\omega^b/b!)$ is constant  over the base, i.e.\ $$B_k=\frac{h^0(E\otimes L^k)}{\rk(E)\Vol_L(B)}\Id_E.$$
\end{definition}

The connection with Gieseker stability is furnished by the following result of X.\ Wang \cite{W1}:

\begin{theorem}[Wang]
The bundle $E$ is Gieseker polystable if and only if there exists a sequence of metrics $H_k$ on $E$ such that $H_k$ is balanced at level $k$ for all $k\gg 0$.
\end{theorem}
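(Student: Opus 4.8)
The plan is to recast the balanced condition at each fixed level $k$ as the vanishing of a finite-dimensional moment map and to apply the Kempf--Ness theorem, thereby matching balanced metrics with GIT polystability of the Gieseker point, which in turn is matched with Gieseker polystability of $E$ through an asymptotic weight computation. Fix $k$ large enough that $E\otimes L^k$ is globally generated with $H^{>0}(E\otimes L^k)=0$, put $V_k=H^0(B,E\otimes L^k)$ and $N_k=\dim V_k=\chi(E\otimes L^k)$. A metric $H\in\Met(E)$ together with the fixed $h_L$ produces an $L^2$-inner product $\mathrm{Hilb}_k(H)$ on $V_k$; conversely an inner product on $V_k$ produces, through the evaluation map $V_k\otimes\mathcal O_B\to E\otimes L^k$, a Fubini--Study metric $\mathrm{FS}_k$ on $E\otimes L^k$ and hence on $E$. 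The balanced condition $B_k=c\,\Id_E$ says exactly that $H$ is a fixed point of $\mathrm{FS}_k\circ\mathrm{Hilb}_k$ up to scaling. On the symmetric space $\mathcal B_k=GL(V_k)/U(V_k)$ of inner products there is a natural $\log\det$-type functional $\mathcal Z_k$ whose first variation is $\int_B\Tr\bigl(\dot H\,(B_k-c\,\Id_E)\bigr)\,\omega^b/b!$, so that balanced metrics are exactly the critical points of $\mathcal Z_k$, equivalently the zeros of the moment map for the $U(V_k)$-action on $V_k$.

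Since the geodesics of $\mathcal B_k$ are the real one-parameter subgroups $H\,e^{tA}$ with $A$ Hermitian, and $\mathcal Z_k$ is convex along them, the existence of a critical point is governed by the Kempf--Ness theorem: $\mathcal Z_k$ attains its minimum (so a balanced metric exists) if and only if the relevant orbit is GIT polystable under $SL(V_k)$. The relevant point here is the Gieseker point, namely the class in $\mathbb{P}\bigl(\mathrm{Hom}(\wedge^{r}V_k,\,H^0(\det E\otimes L^{rk}))\bigr)$ of the map induced on global sections by $\wedge^{r}(V_k\otimes\mathcal O_B)\to\det E\otimes L^{rk}$, where $r=\rk(E)$, with its natural $SL(V_k)$-linearization. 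The problem is thus reduced to showing, for all $k\gg 0$, that this Gieseker point is GIT polystable if and only if $E$ is Gieseker polystable.

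For this equivalence I would run the Hilbert--Mumford weight computation of Gieseker--Maruyama. A one-parameter subgroup $\lambda$ of $SL(V_k)$ is a weighted filtration of $V_k$, and the crucial reduction is that the infimum of the normalised weight over all $\lambda$ is attained on the geometric filtrations $H^0(F\otimes L^k)\subset V_k$ coming from saturated subsheaves $F\subset E$. For such $\lambda$ the weight is, to leading orders in $k$, a fixed positive multiple of
\[
\frac{\chi(E\otimes L^k)}{\rk(E)}-\frac{\chi(F\otimes L^k)}{\rk(F)} .
\]
Strict positivity of all these expressions for $k\gg 0$ is precisely Gieseker stability, while vanishing on some $\lambda$ with closed orbit corresponds to the polystable case. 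This yields both implications: a Gieseker-destabilising subsheaf $F$ gives a one-parameter subgroup along which $\mathcal Z_k$ is unbounded below, obstructing the balanced metric, whereas Gieseker polystability forces properness of $\mathcal Z_k$ and hence a balanced metric at each level $k\gg 0$.

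The main obstacle is the last reduction. One must show that among all one-parameter subgroups of $SL(V_k)$ it suffices to test the geometric ones induced by subsheaves, and that the leading $k$-asymptotics of their weights reproduce the normalised Hilbert polynomials with errors controlled uniformly in $k$. The direction ``balanced $\Rightarrow$ Gieseker polystable'' then follows almost formally from the moment-map/weight dictionary; the real work lies in the converse, where properness of $\mathcal Z_k$, and thus existence of a balanced metric, must be deduced from Gieseker polystability simultaneously for all large $k$.
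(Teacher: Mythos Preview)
The paper does not give its own proof of this statement: it is quoted as a result of X.~Wang \cite{W1} and used as a black box. So there is nothing in the paper to compare your argument against line by line.

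That said, your outline is the correct one and is essentially Wang's strategy (see \cite{W1,W2}): interpret the balanced condition as the zero of a moment map for the $U(V_k)$-action, invoke Kempf--Ness to equate existence of a balanced metric with GIT polystability of the Gieseker point, and then identify the latter with Gieseker polystability of $E$ via the Hilbert--Mumford criterion. You have also correctly located the genuine difficulty: the reduction from arbitrary one-parameter subgroups of $SL(V_k)$ to those induced by saturated subsheaves, together with the asymptotic identification of the weights with the normalised Hilbert polynomials. This step is the substance of the Gieseker--Maruyama--Simpson analysis, and is not a formality; your proposal acknowledges this but does not carry it out. As a plan it is sound, but as written it is a sketch of Wang's proof rather than an independent argument.
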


Taking $E$ to be the trivial bundle, with the trivial metric, gives an important special case.  Here the only metric that can vary is that on the line bundle $L$, and $B_k$ becomes scalar valued.  To emphasise the importance of this case we use separate terminology:

\begin{definition}\label{def:bergmanfunction}
  The \emph{Bergman function} of a metric $h_L$ on a line bundle $L$ with curvature $\omega$ is the restriction to the diagonal of the kernel of the projection operator $P_k \colon C^{\infty}(B,L^k)\to H^0(B,L^k)$.  This scalar valued smooth function shall be denoted by $\rho_k = \rho_k(h_L,\omega^b/b!)$.
\end{definition}

We say the metric $h_L$ is \emph{balanced at level $k$} if $\rho_k$ is a constant function, i.e.\ 
$$\rho_k = \frac{h^0(L^k)} {\Vol_L(B)}.$$

In this context, balanced metrics are related to stability of the base $B$ as in the following result proved by Zhang \cite{Zh}, Luo \cite{Luo}, Paul \cite{Paul} and Phong-Sturm \cite{PhongSturm}. We refer to the survey \cite{Fut} for recent progress on the notion of asymptotic Chow stability.

\begin{theorem}
  There exists a hermitian metric $h_L$ on $L$ that is balanced at level $k$ if and only if the embedding of $X$ into projective space via the linear series determined by $L^k$ is Chow polystable.
\end{theorem}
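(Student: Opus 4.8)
The plan is to deduce the statement from the finite-dimensional Kempf--Ness theorem, identifying balanced metrics with zeros of a moment map and Chow polystability with GIT polystability for the same group action. Write $V=H^0(L^k)$ and $N+1=\dim V=h^0(L^k)$, so that the linear series $|L^k|$ gives an embedding $\iota\colon X\hookrightarrow \PP^N$ onto a subvariety of dimension $n=\dim X$ and some degree $d$. The reductive group $G=SL(V)\cong SL(N+1,\mathbb C)$ acts on $\PP^N$, with maximal compact $K=SU(N+1)$, and the symmetric space $G/K$ is naturally identified with the space of Hermitian inner products on $V$ of fixed determinant. By definition Chow polystability of the embedding means GIT polystability of the Chow point $\hat X$ under $G$, where $\hat X$ is the line spanned by the Cayley--Chow form of $X$ inside a representation $W$ of $G$, with the natural linearisation.

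First I would set up the dictionary between metrics on $L$ and inner products on $V$. A metric $h_L$ with curvature $\omega$ determines, via the $L^2$-pairing $\langle s,t\rangle = \int_X (s,t)_{h_L^k}\,\omega^n/n!$, an inner product $\mathrm{Hilb}_k(h_L)$ on $V$; conversely an inner product on $V$ gives, through a $G$-orthonormal basis $s_0,\dots,s_N$ and the embedding $\iota$, a Fubini--Study-type metric $\mathrm{FS}_k$ on $L$. For a $\mathrm{Hilb}_k(h_L)$-orthonormal basis the Bergman function is $\rho_k=\sum_i |s_i|^2_{h_L^k}$, and $\int_X \rho_k\,\omega^n/n!=h^0(L^k)$ always holds; thus $h_L$ is balanced, i.e.\ $\rho_k\equiv$ const, precisely when the corresponding inner product is a fixed point of the coupled maps $\mathrm{Hilb}_k$ and $\mathrm{FS}_k$. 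I will call such an inner product balanced, so that it suffices to characterise the existence of a balanced inner product in terms of stability.

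The key computation is the identification of the balanced condition with the vanishing of a moment map. The $K$-action on $\PP^N$ preserves the Fubini--Study form and carries a moment map $\mu_{FS}$ valued in the traceless Hermitian matrices $\mathfrak{su}(N+1)^*$; integrating $\mu_{FS}\circ\iota$ over $X$ against $\iota^*\omega_{FS}^n/n!$ yields the moment map for the induced $K$-action on the Chow variety, whose value at $\hat X$ is, up to normalisation, the traceless part of the Hermitian matrix $M_{ij}=\int_X \frac{s_i\bar s_j}{\sum_l|s_l|^2}\,\iota^*\omega_{FS}^n/n!$. This vanishes exactly when $M$ is scalar, i.e.\ when the inner product is balanced, i.e.\ (by the previous paragraph) when $h_L$ is balanced. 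To tie this to stability I would invoke Zhang's expression for the norm $\|\hat X\|^2$ of the Chow form as a Deligne pairing / fibre integral over $X$, and show that the first variation of the Kempf--Ness functional $\Psi(g)=\log\|g\cdot\hat X\|^2$ on $G/K$ in a traceless Hermitian direction $A$ equals $c\,\Tr(A\,\mu(\hat X))$ for a positive constant $c$; hence the critical points of $\Psi$ are exactly the zeros of the moment map, that is, the balanced inner products.

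With this in hand the theorem follows from Kempf--Ness: $\hat X$ is $G$-polystable if and only if the convex functional $\Psi$ attains its infimum on $G/K$, if and only if the moment map has a zero on the $G$-orbit of $\hat X$, the zeros forming a single $K$-orbit. Translating back through the dictionary, a zero of the moment map is a balanced inner product and hence a balanced metric $h_L$; conversely a balanced metric gives a zero of the moment map on the orbit, forcing polystability. I expect the main obstacle to be the first-variation computation of the third paragraph: one must make Zhang's identification of the Chow norm with the fibre integral precise (equivalently, differentiate the resultant), verify that the geodesics $t\mapsto e^{tA}$ in $G/K$ are exactly the one-parameter subgroups computing Chow weights so that convexity and properness of $\Psi$ match the Hilbert--Mumford criterion, and check the bookkeeping that makes the two notions of \emph{balanced} (for $h_L$ and for the inner product) literally coincide, including the correct choice of volume form $\iota^*\omega_{FS}/k$ in the $L^2$-pairing.
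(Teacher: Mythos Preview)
The paper does not supply its own proof of this theorem: it is quoted as a known result, with attribution to Zhang, Luo, Paul, and Phong--Sturm, and no argument is given beyond the citations. So there is nothing in the paper to compare your proposal against line by line.

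That said, your outline is essentially the standard Kempf--Ness argument that underlies the cited references. Your identification of the balanced condition with the vanishing of the moment map for the $SU(N{+}1)$-action on the Chow variety, and the reduction of Chow polystability to the existence of a minimum of the log-norm functional on $G/K$, is exactly the mechanism used by Zhang and refined by Luo, Paul, and Phong--Sturm. The delicate step you flag---differentiating the Chow norm via Zhang's Deligne-pairing formula to obtain the matrix $M_{ij}$ as the gradient---is precisely the technical core of those papers, so your instinct about where the work lies is correct. In short: your proposal is sound and aligned with the literature the paper invokes; the paper itself simply takes the result as a black box.
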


\begin{remark}
  It will turn out that the assumption that $\Scal(\omega)$ is constant is not strictly speaking necessary for our proof of Theorem \ref{thm:mainstable}.  In fact, as will be apparent, a simple modification shows it is sufficient to assume that there is a sequence of metrics $h_{L,k}$ on $L$ that are balanced at level $k$ and whose associated curvatures $\omega_k$ are themselves bounded in the right topology.  However we know of no examples of manifolds that admit such a sequence of metrics that do not admit a cscK metric, and thus this generalisation does not give anything new.
\end{remark}

\subsection{Density of States Expansion}

Through work of  S.T.\ Yau \cite{Yau2}, G.\ Tian \cite{Ti1}, D.\ Catlin \cite{Ca}, S.\ Zelditch \cite{Ze},  X.\ Wang \cite{W2} among others, one can understand the behaviour of the Bergman endomorphism as $k$ tends to infinity through the so-called ``density of states'' asymptotic expansion. We refer to \cite{M-M} as a reference for this topic. The upshot is that for fixed $q,r\ge 0$ one can write
\begin{equation}\label{bergmanexpan} 
B_k = k^b A_0 + k^{b-1} A_1 + \cdots + k^{b-q} A_q + O_{C^r}(k^{b-q-1})
 \end{equation}
where $A_i\in C^{\infty}(\End(E))$ are hermitian endormorphism valued functions.  The  $A_i$ depend on the curvature of the metrics in question, and when necessary will be denoted by $A_i = A_i(h,H)$;  in fact
$$ A_0 = \Id_E \quad \text{and} \quad A_1 = {\sqrt{-1}} \Lambda_{\omega} F_{H} + \frac{\Scal(\omega)}{2} \Id_E.$$ 

Now a key point for our application is the observation that the above expansion still holds if the metrics on $L$ and $E$ are allowed to vary, so long as the \emph{curvature} of the metric on $E$ remains under control.  This is made precise in the following proposition which is a slight generalisation of \cite[Theorem 4.1.1]{M-M}.

\begin{proposition}\label{unifexpanB}
Let $q,r\geq 0$ fixed as above. Let $h_{L,k}\in \Met(L)$ be a sequence of metrics  converging in $C^\infty$ topology to $h_L\in \Met(L)$ such that $\omega :=c_1(h_L)>0$. Let $H_k\in \Met(E)$ be a sequence of metrics  such that  the curvatures $F_{H_k}$ are bounded independently of $k$ in $C^{r'}$ norm for some $r'\gg r,q$.   Consider  the Bergman endomorphism $B_k$ associated to $H_k\otimes h_{L,k}^k\in \Met(E\otimes L^k)$. Then  $B_k$ satisfies the uniform asymptotic expansion \eqref{bergmanexpan} with $A_i=A_i(h_{L,k},H_k)$.
\end{proposition}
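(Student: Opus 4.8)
The plan is to follow the analytic localization approach to the Bergman kernel expansion developed by Bismut--Lebeau and presented systematically in Ma--Marinescu \cite{M-M}, the only novelty being to verify that every estimate can be made uniform in $k$ under our hypotheses. The essential observation is that the proof of the expansion \eqref{bergmanexpan} is local and effective: to obtain the expansion to order $q$ in the $C^r$-norm one needs only to control finitely many derivatives of the underlying geometric data, namely the curvature $\omega=c_1(h_{L,k})$ of the line bundle, the induced K\"ahler metric, and the curvature $F_{H_k}$ of $E$, together with their covariant derivatives up to some order $r'$ depending only on $q$ and $r$. Since $h_{L,k}\to h_L$ in $C^\infty$ and, by assumption, $F_{H_k}$ is bounded in the $C^{r'}$-norm independently of $k$, all of these quantities are bounded uniformly in $k$, so it suffices to revisit the constants appearing in the standard argument and record their dependence on these bounds.

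Concretely, I would first establish a uniform spectral gap for the Kodaira Laplacian $\Box_k=\dbar^{*}\dbar$ acting on sections of $E\otimes L^k$: its spectrum is contained in $\{0\}\cup[Ck-C',\infty)$, where the constants $C>0$ and $C'$ depend only on the $C^{r'}$-bounds above. This follows from the Bochner--Kodaira--Nakano identity, in which the curvature term $\sqrt{-1}\Lambda_\omega F_{H_k\otimes h_{L,k}^k}$ grows linearly in $k$ with positive lower bound controlled uniformly by the positivity of $\omega$ and the uniform bound on $F_{H_k}$. The spectral gap, combined with the finite-propagation-speed property of the wave operator $\cos(t\sqrt{\Box_k})$, localizes the Bergman kernel: one shows $B_k(x,y)=O_{C^r}(k^{-N})$ for every $N$ whenever $x,y$ lie at a fixed positive distance, with implied constants depending only on the uniform geometric bounds. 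Hence the value of $B_k$ on the diagonal at a point $x_0$ is determined, up to negligible error, by the geometry in a shrinking neighbourhood of $x_0$.

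I would then perform the usual rescaling: in K\"ahler normal coordinates centred at $x_0$, together with a synchronous trivialization of $E$ and $L$, rescale the coordinates by $k^{-1/2}$ and the fibre metric accordingly. The rescaled operator expands as $\mathcal{L}_0+k^{-1/2}\mathcal{L}_1+k^{-1}\mathcal{L}_2+\cdots$, where $\mathcal{L}_0$ is the model harmonic-oscillator operator on $\mathbb{C}^b\otimes E_{x_0}$ whose Bergman kernel is explicit, and the $\mathcal{L}_j$ are differential operators with coefficients polynomial in the covariant derivatives of the curvatures at $x_0$. Expanding the resolvent $(\lambda-\Box_k)^{-1}$ as a series in $k^{-1/2}$ and integrating over a contour separating $0$ from the rest of the spectrum yields the coefficients $A_i=A_i(h_{L,k},H_k)$ as universal algebraic expressions in the curvature data, matching those of \cite{M-M}; in particular $A_0=\Id_E$ and $A_1=\sqrt{-1}\Lambda_\omega F_{H_k}+\tfrac12\Scal(\omega)\Id_E$, as recorded above.

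The main obstacle is one of uniformity rather than of new ideas: one must check that the constants in the spectral gap, the off-diagonal decay, the elliptic and Sobolev estimates used in the rescaling, and the remainder bounds in the resolvent expansion all depend only on a fixed finite number $r'$ of derivatives of the geometry, so that the hypothesis bounding $F_{H_k}$ in $C^{r'}$ for $r'\gg q,r$ genuinely suffices. Since the entire Bismut--Lebeau machinery is quantitative and the dependence of each constant on the geometric bounds is polynomial, this bookkeeping goes through; the convergence $h_{L,k}\to h_L$ in $C^\infty$ then guarantees that the line-bundle contributions are uniformly controlled, and the expansion \eqref{bergmanexpan} holds uniformly in $k$ with the stated coefficients.
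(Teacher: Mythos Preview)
Your proposal is correct, but it takes a genuinely different route from the paper's own sketch. The paper adapts Tian's peak-section method as presented by Wang: one constructs approximately holomorphic sections concentrated near a point, corrects them via H\"ormander's $L^2$ $\bar\partial$-estimates (available because $\sqrt{-1}\Lambda F_{H_k}+k\Id_E$ is positive for $k$ large), and then computes the diagonal kernel from the resulting orthonormal basis. Uniformity in $k$ is obtained by noting that the Taylor expansion of $\mathbf{H}_k$ in a suitable holomorphic frame has coefficients given by covariant derivatives of $F_{H_k}$, which are bounded by hypothesis; one may then follow Wang's computations line by line.

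You instead invoke the Bismut--Lebeau analytic localization as in Ma--Marinescu, which is in fact the reference \cite[Theorem 4.1.1]{M-M} the paper cites as the model statement. Your route is arguably more systematic for tracking uniformity: the spectral gap, finite-propagation-speed localization, and resolvent expansion are all quantitative from the outset, and one needs only to observe that every constant depends polynomially on a fixed finite jet of the geometry. The peak-section approach is more elementary and avoids the heavier machinery, but requires more hand-checking that each H\"ormander and Taylor-expansion estimate is uniform. Either argument suffices; you might note explicitly that the paper's own sketch goes via peak sections rather than the heat-kernel route.
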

We only provide a sketch of the proof of this proposition by pointing out how to adapt  Tian's construction of peak sections (\cite{Ti1}, \cite[Section 5]{W2}) to this setting.  It is important for our application that we do not assume the $H_k$ necessarily converge. 

In order to modify a smooth peaked section to a holomorphic one, and control the $L^2$ norm of this change, one needs to apply H\"ormander $L^2$ estimates for the $\bar{\partial}$ operator. But under our assumptions $\i\Lambda F_{H_k}+k  \Id_E$ is positive definite
 for $k$ large enough so H\"ormander's theorem (see  \cite[Theorem (8.4)]{De}) holds on $E\otimes L^k$.

Since the calculation of the asymptotics is local in nature, another key ingredient is a pointwise expansion of the involved metrics. Fix a point $z_0\in B$. From  \cite[Chapter V - Theorem 12.10]{De2}, we know that 
 there exists a holomorphic frame $\left( e_{i}\right) _{i=1,..,rk(E)}$ over a neighbourhood of $z_{0}\in B$ 
such that, with respect to this frame, the endomorphism $ \mathbf{H}_{{k}}(z)_{ij}=H_k(e_i,e_j)$ associated to the metric $H_{k}$ has the following expansion:
\begin{equation}
 \mathbf{H}_{k}(z) _{ij}=\left( \delta _{ij}-\sum_{1\leq
k,l\leq n}\left(F_{H_{k}}\right)_{i\overline{j}k\overline{l}} z_{k}\bar{z}_{l}+\mathbf{O}%
\left( |z|^{3}\right) \right),  \label{20}
\end{equation}
Furthermore, by induction one can show that the higher order terms of the expansions are given by derivatives of the curvature of the metric on $E$. For instance, at order $3$, $\mathbf{H_{k}}(z)_{ij}$ has an extra term of the form $$-\frac{1}{2}\left((F_{H_k})_{i\bar ja\bar b,c} z_a \bar z_b z_c + (F_{H_k})_{i\bar ja\bar b,\bar c} \right) z_a \bar z_b \bar z_c,$$ and thus in \eqref{20}, $\mathbf{O}\left( |z|^{3}\right)=O_{C^{r'-1}}(k^0)$ under our assumptions. Similarly,  the higher order terms of this Taylor expansion are under control. Using this, one can follow line by line the arguments of \cite[Section 5]{W2} to obtain the proposition.

\section{Construction of balanced metrics}

In this section  we construct metrics for $\PP(E)$ that are almost balanced by perturbing the metrics on the bundle $E$.  Then an application of the implicit function argument from \cite{S} will provide the required balanced metrics.  

\subsection{Relating the metric on the bundle to the metric on the projectivisation\label{relate}}

We first recall the techniques in \cite{S} that relate the Bergman endomorphism on $E\otimes L^k$ and the Bergman function on the projectivisation $(\mathbb{P}(E),\mathcal{L}_k:=\mathcal{O}_{\mathbb{P}E}(1)\otimes \pi^* L^{k})$. \smallskip

Let $V$ be a vector space equipped with a hermitian metric $H_V$.  This induces in a natural way a Fubini-Study hermitian metric on $ \mathcal{O}_{\mathbb{P}(V)}(1))$ which we denote by $\hat{h}_V$.  Similarly given a hermitian metric $H$ on $E$ we get an induced metric $\hat{h}_E$ on $\mathcal O_{\mathbb P(E)}(1)$.    We denote by 
$$\rho_k=\rho_k(\hat{h}_E\otimes \pi^*h_L^k)$$
 the Bergman function on $(\PP(E),\mathcal L_k)$ induced from the metric $\hat{h}_E\otimes \pi^*h_L^k$.   The next results gives an asymptotic expansion for $\rho_k$ in  $k$ (observe this is not the same as the usual density of states expansion, since we are not taking powers of a fixed line bundle).

\begin{theorem}[Seyyedali \cite{S}]\label{thm:seyyedali}
There exists smooth endomorphism valued functions $\tilde{B}_k=\tilde{B}_k(H, h_L)$ such that
\begin{equation}\rho_{k}([v])=\frac{1}{c_r} \Tr\left(\frac{{v\otimes v^{*_{H}}}}{\Vert v\Vert_{H}^2}\widetilde{B}_k(H, h_L)\right)\quad\text{for } [v]\in \PP(E),\label{bfunction}\end{equation}
where  $c_r:= \int_{\mathbb{C}^{r-1}}\frac{d\zeta \wedge d\bar\zeta}{(1+\sum_{j=1}^{r-1}|\zeta_j|^2)^{r+1}}$.  Moreover $\tilde{B}_k$ has an asymptotic expansion of the form
\begin{eqnarray}
 \widetilde{B}_k(H, h_L) &=& k^b\Id_E  \label{expansiontildeB} + k^{b-1}\left(\sqrt{-1}[\Lambda_\omega F_{H}]^0 +\frac{\rk(E)+1}{2\rk(E)}\Scal(\omega)\Id_E\right)+\cdots\nonumber
\end{eqnarray}
where $[T]^0$ denotes the traceless part of the operator $T$.
\end{theorem}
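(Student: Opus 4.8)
The plan is to compute the Bergman function $\rho_k$ on $(\PP(E),\mathcal L_k)$ fibrewise, reducing to a finite-dimensional Fubini--Study computation on each fibre, and then feed in the density-of-states expansion of Proposition \ref{unifexpanB}. First I would exploit the convention that $\pi_*\mathcal O_{\PP(E)}(1)=E$, so that $H^0(\PP(E),\mathcal L_k)=H^0(B,E\otimes L^k)$. Thus a basis of holomorphic sections of $\mathcal L_k$ is obtained by pushing forward a basis of $H^0(B,E\otimes L^k)$, and the $L^2$-inner product induced on sections of $\mathcal L_k$ by the metric $\hat h_E\otimes\pi^*h_L^k$ together with the volume form should be compared with the $L^2$-inner product on $H^0(B,E\otimes L^k)$ coming from $H\otimes h_L^k$. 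The key computation is that integrating the Fubini--Study metric $\hat h_E$ over the fibre $\PP(E_x)\cong\PP^{r-1}$ produces, up to the explicit constant $c_r$, the bundle metric $H$ on the fibre $E_x$; this is precisely the content that lets one write $\rho_k$ at a point $[v]$ as a weighted trace $\frac{1}{c_r}\Tr\left(\frac{v\otimes v^{*_H}}{\|v\|_H^2}\,\tilde B_k\right)$ against an endomorphism-valued object $\tilde B_k$ built from the Bergman kernel data of $E\otimes L^k$.

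Next I would identify $\tilde B_k$ with (a constant multiple of) the Bergman endomorphism $B_k$ of Section \ref{BalmetricSection}, or more precisely express it through the same peak-section analysis. The formula \eqref{bfunction} should be read as saying that $\tilde B_k$ is the endomorphism whose matrix entries are the fibre integrals of the Bergman kernel of $\mathcal L_k$ restricted to the diagonal, and the pointwise value of $\rho_k$ at $[v]$ is the Rayleigh-type quotient of this endomorphism against the rank-one projection $v\otimes v^{*_H}/\|v\|_H^2$. Once this identification is in place, the asymptotic expansion \eqref{expansiontildeB} follows by invoking Proposition \ref{unifexpanB} for the sequence $H_k$, $h_{L,k}$: the leading term is $k^b\Id_E$ because $A_0=\Id_E$, and the subleading term should come from $A_1=\sqrt{-1}\Lambda_\omega F_H+\frac{\Scal(\omega)}{2}\Id_E$ together with a correction from the geometry of the fibre direction. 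Indeed, taking $\mathcal O_{\PP(E)}(1)$ rather than a power of a fixed line bundle means the relevant density-of-states expansion acquires an extra scalar contribution of $\frac{1}{2\rk(E)}\Scal(\omega)\Id_E$ from the relative Fubini--Study geometry, which is exactly what turns $\frac{1}{2}\Scal(\omega)$ into $\frac{\rk(E)+1}{2\rk(E)}\Scal(\omega)$ and replaces $\sqrt{-1}\Lambda_\omega F_H$ by its traceless part $[\sqrt{-1}\Lambda_\omega F_H]^0$.

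The main obstacle I expect is the careful bookkeeping of the fibre integration at subleading order, in particular tracking how the second-order Taylor data of the fibrewise Fubini--Study metric couples to the curvature $F_H$ and to the scalar curvature of the base. The splitting into a traceless part $[\Lambda_\omega F_H]^0$ and a scalar part is the delicate point: the trace part of $\sqrt{-1}\Lambda_\omega F_H$ must be absorbed into the scalar coefficient together with the $\frac{1}{2}\Scal(\omega)$ coming from $A_1$ and the fibre contribution, and verifying that these combine with the correct rational coefficients $\frac{\rk(E)+1}{2\rk(E)}$ requires an explicit evaluation of moment-type integrals of $|\zeta|^2$-weighted Fubini--Study volumes over $\PP^{r-1}$ (which is where the constant $c_r$ and its relatives enter). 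Since the underlying analytic input — the uniform expansion with controlled remainders — is already supplied by Proposition \ref{unifexpanB}, I would treat the remainder of the argument as a reduction to these universal fibre integrals, and I would not attempt to grind through them here; the content of the theorem is exactly this organisation of Seyyedali's fibre-integration argument adapted to our setting where the metrics $H_k$ need not converge.
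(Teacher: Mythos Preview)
Your overall strategy---identify $H^0(\PP(E),\mathcal L_k)$ with $H^0(B,E\otimes L^k)$, perform a fibre integration to produce the trace formula \eqref{bfunction}, and then import the density-of-states expansion---is exactly the route the paper (following Seyyedali) takes. Where your account drifts from the paper is in the second step, and the drift matters for the subleading coefficient.

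You write that you would ``identify $\tilde B_k$ with (a constant multiple of) the Bergman endomorphism $B_k$'', and then locate the correction that produces the traceless part and the $\tfrac{\rk(E)+1}{2\rk(E)}$ coefficient in ``moment-type integrals of $|\zeta|^2$-weighted Fubini--Study volumes over $\PP^{r-1}$''. This is not quite the mechanism. The paper records the precise relation
\[
\Bigl(\sum_{j=0}^b k^{j-b}\Psi_j\Bigr)\,\tilde B_k(H,h_L)=B_k(H\otimes h_L^k,\omega^b/b!),
\]
where the $\Psi_j\in\End(E)$ are \emph{curvature-dependent} operators, explicitly $\Psi_j=\Lambda_\omega^{b-j}\bigl(F_H^{b-j}+P_1(H)F_H^{b-j-1}+\cdots+P_{b-j}(H)\bigr)$ for polynomials $P_i$ in the Chern forms of $H$. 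So $\tilde B_k$ is not a constant multiple of $B_k$; the discrepancy is an operator-valued power series in $k^{-1}$ coming from the expansion of the full K\"ahler volume form on $\PP(E)$ (i.e.\ of $c_1(\hat h_E\otimes\pi^*h_L^k)^{b+\rk(E)-1}$) as a sum of fibre-Fubini--Study pieces wedged with base forms and curvature corrections. It is inverting this factor, and combining $\Psi_{b-1}$ with $A_1$, that simultaneously converts $\sqrt{-1}\Lambda_\omega F_H$ to its traceless part and shifts $\tfrac12\Scal(\omega)$ to $\tfrac{\rk(E)+1}{2\rk(E)}\Scal(\omega)$. The higher fibre moments you mention play no role at this order: once \eqref{bfunction} is established, only the zeroth moment (the constant $c_r$) appears, and the rest is algebra with the $\Psi_j$.

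In short, your outline is correct at the architectural level and would succeed, but the ``delicate bookkeeping'' you anticipate is not a fibre-moment computation; it is the explicit identification and inversion of the $\Psi_j$ factor relating the two $L^2$-inner products, which is where the curvature of $H$ enters the subleading term.
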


We refer to $\tilde{B}_k$ as the \emph{distorted Bergman endomorphism}.  The proof of the previous results is obtained by relating $\tilde{B}_k$ to the Bergman endomorphism by an identity of the form
$$(\sum_{j=0}^b k^{j-b}\Psi_j)\widetilde{B}_k(H, h_L)=B_k(H\otimes h_L^k,\omega^b/b!),$$
for certain $(\Psi_j)_{j=0..b}\in \End(E)$ that depend only on the curvature of the metric $H\in \Met(E)$.  In fact,
\begin{eqnarray*}
\Psi_j&=&\Lambda^{b-j}_\omega\left(F_{H}^{b-j}+P_1(H)F_{H}^{b-j-1}+...+P_{b-j}(H)\right),
 \end{eqnarray*}
where  $P_i(H)=P_i(C_1(H),...,C_{b-j}(H))$ are polynomials of degree $i$ in the $k$-th Chern forms   $C_k(H)$ of $H$, $1\leq k\leq b-j$ \cite[p 594]{S}  \smallskip  

Given this, the asymptotic expansion for $\tilde{B}_k$ follows from that of $B_k$.  Thus, using Proposition \ref{unifexpanB} we see that Theorem \ref{thm:seyyedali} in fact holds uniformly if the metric $h$ is allowed to vary in a compact set, and the metric $H$ is allowed to vary in such a way that the curvature $F_H$ is bounded (as in the case for almost Hermitian-Einstein metrics).

\subsection{Perturbation Argument}
From now on let $E\rightarrow B$ be a vector bundle satisfying assumption (\ref{H}), equipped with a family of almost Hermitian-Einstein metrics $H_k\in \Met(E)$,  and $(L,h_L)$ a polarisation of the underlying manifold $B$ such that $\omega=c_1(h_L)$ is a cscK metric.   We now show how to adapt the methods of \cite[Theorem 1.2]{S}, and prove the existence of  metrics on $\mathcal{L}_k$ that are almost balanced, in the sense that the associated Bergman function is constant up to terms that are negligible for large $k$ \cite{D1}.

The approach is to perturb both the K\"ahler metric $\omega$ and the almost Hermitian-Einstein metric $H_k$ on $E$  by considering
\begin{eqnarray*}
 \omega'_k&=&\omega+ \ddbar(\sum_{i=1}^qk^{-i}\phi_{i}),\\ 
H_k'&=&H_k\left(\Id_E+\sum_{i=1}^q k^{-i}\Phi_{i}\right),
\end{eqnarray*}
where $\phi_i$ are smooth functions on $B$ and $\Phi_i$ are smooth endomorphisms of $E$. We will also denote the perturbed metric on $L$ as $h_L'=h_Le^{-\sum_{i=1}^q k^{-i}\phi_i}\in \Met(L)$ which satisfies $\omega'_k=c_1(h_L')$. 
The perturbations terms $\Phi_i$ and $\phi_i$ will be constructed iteratively to make the distorted Bergman endomorphism approximately constant.   In fact it will be necessary for $\phi_i$ and $\Phi_i$ to themselves depend on $k$, but for fixed $i$ they will be of order $O_{C^r}(k^0)$, and this will be clear from their construction.\smallskip

Observe the metrics $\omega'_k$ lie in a compact set, and the curvature of the metrics $H_k'$ are bounded over $k$.  For the first step of the iteration, where $q=1$, we can apply Theorem \ref{thm:seyyedali} to deduce
\begin{equation}\label{modif}\tilde{B}_k(H_k',h_L')=k^b \Id_E + k^{b-1}A_1(H_k,\omega)+ k^{b-2}(A_2(H_k,\omega)+\delta)+ \cdots\end{equation}
where
$$A_1(H_k,\omega) = \sqrt{-1}[\Lambda_\omega F_{H_k}]^0 +\frac{\rk(E)+1}{2\rk(E)}\Scal(\omega)\Id_E$$ 
and we have defined
$$\delta=\delta(\Phi_1,\phi_1):=D(A_1)_{H_k,\omega}(\Phi_1,\phi_1),$$
where $D(A_1)$ is the linearisation of $A_1$.  Thus
\begin{eqnarray*}
D(A_1)_{H,\omega}(\Phi,\phi)&=&\frac{d}{dt}\Big\vert_{t=0}A_1(H(\Id_E+t\Phi),\omega+t\ddbar\phi) \hspace{-6.5cm} \nonumber \\ 
&=& \frac{\rk(E)+1}{2\rk(E)}(\mathbb{L}\phi)\Id_E \nonumber \\
&&+\i\left[\Lambda_\omega\bar\partial\partial \Phi + \Lambda^2_\omega(F_{H}\wedge \ddbar \phi) - \Delta_{\omega}\phi \Lambda_\omega F_{H}\right]^0 \label{linear2}
\end{eqnarray*}
where $\mathbb L$ denotes the Lichnerowicz operator with respect to $\omega$.

Now from the definition of the almost Hermitian-Einstein metrics we have an expansion \eqref{expansionRi}
\begin{equation}\label{expanT}T:=\sqrt{-1} \Lambda_{\omega}F_{H_{k}} - \mu( E)\Id_{E}=T_0 + T_1k^{-1} + T_2 k ^{-2}+ \cdots + T_{b-1}k^{b-1},
\end{equation}
where the $T_i=O_{C^r}(k^0)$ and $T_0$ is constant, see \eqref{eq:r0}. \\
The metric $H_k$ on $E$ induces a metric on $\End(E)$ (that we still denote by $H_k$ in the sequel) and one has an operator $\partial_{\End(E)}$ (that we still denote $\partial$ in the sequel) as the $(1,0)$ part of the connection operator induced on $\End(E)$ from the Chern connection on $E$ compatible with $H_k$. One can write the associated curvature on $\End(E)$ as $F_{\End(E),H_k}=F_{H_k} \otimes Id_{E^*}+Id_{E}\otimes F_{E^*,H_k^*}$. Thus, we obtain a similar expansion as \eqref{expanT},
\begin{eqnarray}\label{expanR}R&:=&\sqrt{-1} \Lambda_{\omega}F_{\End(E), H_{k}} - \mu(\End(E))\Id_{\End(E)}\\
&=& \sqrt{-1} \Lambda_{\omega}F_{\End(E), H_{k}} \nonumber \\
&=&R_0 + R_1k^{-1} + R_2 k ^{-2}+ \cdots + R_{b-1}k^{b-1}, \nonumber
\end{eqnarray}
where the $R_i=O_{C^r}(k^0)$ and $R_0$ is constant.\\
Using this, we rewrite the distorted Bergman endomorphism as
\begin{eqnarray*}
 \tilde{B}_k(H_k',h_L') &=& k^b \Id_E + k^{b-1} \tilde{A}_1 + k^{b-2} \tilde{A}_2 + \cdots
\end{eqnarray*}
where 
\begin{eqnarray*}
\tilde{A}_1 &=&  \frac{\rk(E)+1}{2\rk(E)}\Scal(\omega)\Id_E\\
\tilde{A}_2 &=&  [T_1]^0 + A_2 + \delta(\Phi_1,\phi_1)
\end{eqnarray*}
since $[T_0]^0=0$.

Observe since $\Scal(\omega)$ is constant, the top coefficient $\tilde{A}_1$ is also constant.   The aim now is to find a perturbation that makes the lower order terms also constant.   To this end it is convenient to define
$$\tilde{R} = R-R_0=O_{C^r}(1/k)$$ 
and to rewrite the Bergman endomorphism once again, this time  in the following way using \eqref{expanR}
\begin{equation}\label{modifiedasympt} \tilde{B}_k(H_k',h_L') = k^b \Id_E + k^{b-1} \tilde{A}_1 + k^{b-2} (\tilde{A}_2 - [\tilde{R}\Phi_1]^0) + k^{b-3} (\tilde{A}_3+ [R_1 \Phi_1]^0) +
 \cdots.\end{equation}
 Remark that since $\Phi_1$ is $O_{C^r}(k^0)$, the same will be true of $[R_1\Phi_1]^0$.    The reason for adding and subtracting this term arises when it comes to ensuring that the $\Phi_i$ we construct are hermitian operators, as in the next proposition which ensures that it is possible to find $\phi_1$ and $\Phi_1$ to make the $k^{b-2}$ term constant. \smallskip

Define $\End_0(E)$ to be the vector space of endomorphisms $\eta$ of $E$ such that $\int_B  \Tr \eta \,\frac{\omega^b}{b!}=0$, $\End_0^0(E)$ the trace free elements of $\End_0(E)$ and $C_0^\infty(B,\mathbb{R})$ the space of smooth functions with null integral with respect to the volume form $\dvol$.

\begin{proposition}\label{construct}
 Assume that $\Aut(B,L)/\mathbb{C}^*$ is discrete, $\omega$ is a cscK metric and that $E$ satisfies assumption (\ref{H}). Then  for any endomorphism $\zeta\in \End_0(E)$ there exists a unique couple $(\Phi_1,\phi_1)\in \End_0^0(E)\times C^{\infty}_0(B,\mathbb{R})$ such that
\begin{equation}
\delta(\Phi_1,\phi_1) -[{\tilde{R}}\Phi_1]^0=\zeta. \label{DA1eqn}
\end{equation}
 Furthermore,  $\Phi_1$ is hermitian with respect to $H_k$ if and only if the same is true of $\zeta$. 

Finally if $r\ge 4$ and $\alpha\in (0,1)$ there is a  $c_{r,\alpha}$ such that for all $\zeta$,
$$||\phi_1||_{C^{r,\alpha}}+ || \Phi_1 ||_{C^{r-2,\alpha}} \le c_{r,\alpha} || \zeta ||_{C^{r-4,\alpha}}.$$
 \end{proposition}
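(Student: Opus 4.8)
The plan is to exploit the block structure of \eqref{DA1eqn} by splitting it into its pointwise trace and trace-free parts, which decouples the system into one scalar equation for $\phi_1$ and one endomorphism equation for $\Phi_1$. Taking the trace of \eqref{DA1eqn} and using that both $[\tilde R\Phi_1]^0$ and the trace-free bracket in $\delta$ are trace-free, only the scalar term survives, giving $\tfrac{\rk(E)+1}{2}\mathbb{L}\phi_1=\Tr\zeta$. Since $\omega$ is cscK and $\Aut(B,L)/\mathbb{C}^*$ is discrete, the Lichnerowicz operator $\mathbb{L}$ has kernel exactly the constants, hence is an isomorphism from $C^{r,\alpha}_0(B,\mathbb{R})$ onto $C^{r-4,\alpha}_0(B,\mathbb{R})$; as $\zeta\in\End_0(E)$ forces $\int_B\Tr\zeta\,\frac{\omega^b}{b!}=0$, this determines a unique $\phi_1\in C^\infty_0(B,\mathbb{R})$, with $\|\phi_1\|_{C^{r,\alpha}}\le c\|\zeta\|_{C^{r-4,\alpha}}$ by the Schauder estimate for the fourth-order operator $\mathbb{L}$.

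With $\phi_1$ now fixed, the trace-free part of \eqref{DA1eqn} becomes a linear equation $\mathcal{P}\Phi_1=\psi$ for $\Phi_1$, where $\mathcal{P}\Phi:=[\L\,\Phi]^0-[\tilde R\Phi]^0$ and the known right-hand side $\psi:=[\zeta]^0-\i[\Lambda_\omega^2(F_{H_k}\wedge\ddbar\phi_1)-\Delta_\omega\phi_1\,\Lambda_\omega F_{H_k}]^0$ collects the remaining $\phi_1$- and $\zeta$-terms. The leading operator $[\L\,\cdot]^0$ is the trace-free part of the complex Laplacian $\i\Lambda_\omega\dbar\d$ on $\End(E)$, which is elliptic and self-adjoint, and whose kernel is the space of holomorphic endomorphisms. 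Here the key input is that a Gieseker stable bundle is simple, so that $\Ker(\dbar)=\mathbb{C}\Id_E$; restricted to the trace-free endomorphisms this kernel is trivial, so $[\L\,\cdot]^0$ is invertible on $\End^0_0(E)$ with a two-derivative gain.

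The remaining analytic point is that $\mathcal{P}$ differs from this invertible leading operator only by $-[\tilde R\,\cdot]^0$, and by \eqref{expanR} together with \eqref{eq:r0} we have $\tilde R=R-R_0=O_{C^r}(1/k)$ (indeed $R$ acts as $[\,\i\Lambda_\omega F_{H_k},\cdot\,]=[T,\cdot]$, whose constant part vanishes since $T_0$ is scalar). Thus $\mathcal{P}$ is a zeroth-order, $O(1/k)$ perturbation of an invertible elliptic operator, so for $k$ large a Neumann-series argument gives invertibility of $\mathcal{P}$ with inverse bounded uniformly in $k$. Feeding in the regularity of $\psi$ — which lies in $C^{r-4,\alpha}$ and satisfies $\|\psi\|_{C^{r-4,\alpha}}\le c\|\zeta\|_{C^{r-4,\alpha}}$ using the bound on $\phi_1$ and the fact that the $F_{H_k}$ are controlled in a high $C^{r'}$-norm — the elliptic estimate for the second-order operator $\mathcal{P}$ then yields a unique $\Phi_1\in\End^0_0(E)$ with $\|\Phi_1\|_{C^{r-2,\alpha}}\le c_{r,\alpha}\|\zeta\|_{C^{r-4,\alpha}}$, completing the stated bound.

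Finally, the hermiticity assertion is exactly what the addition and subtraction of $[\tilde R\Phi_1]^0$ is designed to secure. The operator $\i\Lambda_\omega\dbar\d$ fails to preserve hermiticity on a non-Hermitian-Einstein metric precisely by the mean-curvature term $R$ (via the Kähler/Kodaira identity relating $\i\Lambda_\omega\dbar\d$ and $\i\Lambda_\omega\d\dbar$), and the correction $-[\tilde R\,\cdot]^0$ cancels this defect so that $\mathcal{P}$ commutes with the real involution $\Phi\mapsto\Phi^{*_{H_k}}$; since $\mathbb{L}$ produces a real $\phi_1$, the full operator of \eqref{DA1eqn} respects the splitting into hermitian and anti-hermitian parts. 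Decomposing $\Phi_1$ accordingly and invoking injectivity of $\mathcal{P}$, the anti-hermitian part of $\Phi_1$ maps isomorphically to the anti-hermitian part of $\zeta$, giving the equivalence. I expect the main obstacle to be this last step: verifying cleanly that the $\tilde R$-correction exactly symmetrises $[\L\,\cdot]^0$ with respect to $H_k$ while simultaneously retaining the uniform-in-$k$ invertibility, since both hinge on a careful bookkeeping of the curvature terms in \eqref{expanT}–\eqref{expanR}.
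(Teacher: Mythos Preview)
Your proposal is correct and follows essentially the same route as the paper: both decouple \eqref{DA1eqn} via the trace/trace-free splitting, reduce the scalar part to invertibility of the Lichnerowicz operator $\mathbb{L}$ on $C^\infty_0(B,\mathbb{R})$, reduce the endomorphism part to invertibility of $\L\cdot - [\tilde{R}\,\cdot]^0$ on trace-free endomorphisms (which the paper isolates as Lemma~\ref{adjoint}), and obtain the hermiticity statement from the K\"ahler identity of Lemma~\ref{kid1}. The only tactical difference is that you deduce invertibility of $\L - [\tilde{R}\,\cdot]^0$ by a Neumann-series perturbation off $\L=\partial^*\partial$ (trivial kernel by simplicity), whereas the paper argues directly via the Poincar\'e-type inequality of Lemma~\ref{Poincare} to kill the kernel of the full operator; both rely on the same uniform bound $\tilde{R}=O_{C^r}(1/k)$, and one should note that since $\L$ itself depends on $H_k$, the uniform-in-$k$ lower bound on its first eigenvalue needed for your Neumann series is precisely what that Poincar\'e inequality supplies.
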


We observe that $\tilde{R}$ is non-zero for all $k$ since by assumption $E$ is not Mumford stable and thus none of the $H_k$ are Hermitian-Einstein.  The proof of the previous proposition will depend on a number of Lemmas, the first of which is a consequence of K\"ahler identities. 
\begin{lemma}\label{kid1}
Let $H$ be a hermitian metric on $E$ which induces a metric on $\End(E)$ that we still denote $H$. Then for any $\zeta \in \End({E})$,  
$$\sqrt{-1}\Lambda_\omega \bar{\partial}\partial \zeta^{*_H} = (\L\zeta - [\sqrt{-1}\Lambda_\omega F_{\End(E),H}, \zeta])^{*_H}.$$
\end{lemma}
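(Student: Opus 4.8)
The plan is to treat this as a pointwise, purely algebraic identity and prove it in a local holomorphic frame, resting on two ingredients: the compatibility of the Chern connection with $H$ (which interchanges $\d$ and $\dbar$ under the Hermitian adjoint), and the fact that the curvature of a Chern connection has no $(2,0)$ or $(0,2)$ part.

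First I would establish the conjugation identity
$$\d(\Theta^{*_H}) = (\dbar\Theta)^{*_H}$$
for an arbitrary $\End(E)$-valued form $\Theta$. In a holomorphic frame with metric matrix $\mathbf{H}$ the adjoint is $\Theta^{*_H} = \mathbf{H}^{-1}\Theta^{\dagger}\mathbf{H}$, while the $(1,0)$-part of the induced connection on $\End(E)$ is $\d + [\mathbf{H}^{-1}\d\mathbf{H},\,\cdot\,]$. Expanding both sides, every term containing $\d\mathbf{H}$ cancels (the cancellation is exactly the statement that the connection is $H$-compatible), and the conjugate-transpose turns the componentwise $\dbar$ into $\d$; this leaves the claim. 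Taking adjoints of this gives the companion relation $\dbar(\Theta^{*_H}) = (\d\Theta)^{*_H}$. Applying both to the $(0,0)$-form $\zeta$ in turn yields
$$\dbar\d\,\zeta^{*_H} = \dbar\bigl((\dbar\zeta)^{*_H}\bigr) = (\d\dbar\zeta)^{*_H}.$$

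Next I would invoke that, since the curvature is of type $(1,1)$, one has $\d^2=\dbar^2=0$ on sections of $\End(E)$ and hence
$$\dbar\d\zeta + \d\dbar\zeta = F_{\End(E),H}\cdot\zeta,$$
the right-hand side being the curvature of $\End(E)$ acting on $\zeta$ (equivalently the commutator of $\zeta$ with $F_{H}$). Substituting $\d\dbar\zeta = F_{\End(E),H}\cdot\zeta - \dbar\d\zeta$ into the previous display and then applying $\i\Lambda_\omega$ completes the computation, provided one tracks a single sign: because the Hermitian adjoint is conjugate-linear whereas $\Lambda_\omega$ is real and commutes with conjugation, one has $\i\Lambda_\omega(\Theta^{*_H}) = -(\i\Lambda_\omega\Theta)^{*_H}$ for any $(1,1)$-form valued $\Theta$. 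This minus sign, combined with the one produced by the substitution, reassembles the expression into exactly $\bigl(\L\zeta - [\i\Lambda_\omega F_{\End(E),H},\zeta]\bigr)^{*_H}$, noting that $\i\Lambda_\omega(F_{\End(E),H}\cdot\zeta) = [\i\Lambda_\omega F_{\End(E),H},\zeta]$.

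The routine pieces are the frame computation behind the conjugation identity and the standard curvature relation on $\End(E)$. The one place that demands genuine care, and the main potential source of error, is the sign bookkeeping: the interplay of $\i$ and $\Lambda_\omega$ with the conjugate-linear adjoint (the crucial flip $\i\Lambda_\omega(\Theta^{*_H}) = -(\i\Lambda_\omega\Theta)^{*_H}$) together with the Koszul form-degree signs in the Leibniz rule used to verify $\d(\Theta^{*_H}) = (\dbar\Theta)^{*_H}$ for forms of positive degree. Once these are fixed consistently, the stated identity falls out directly.
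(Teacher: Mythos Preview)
The paper does not actually give a proof of this lemma; it merely states it as ``a consequence of K\"ahler identities'' and moves on. Your proposal correctly supplies the omitted argument, and the ingredients you isolate --- the compatibility $\partial(\Theta^{*_H}) = (\bar\partial\Theta)^{*_H}$ coming from the Chern connection, the $(1,1)$-type of the curvature giving $\partial\bar\partial\zeta + \bar\partial\partial\zeta = F_{\End(E),H}\cdot\zeta$, and the sign flip $\sqrt{-1}\Lambda_\omega(\Theta^{*_H}) = -(\sqrt{-1}\Lambda_\omega\Theta)^{*_H}$ from conjugate-linearity --- are exactly what ``K\"ahler identities'' is shorthand for here, so your approach is what the authors have in mind.
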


\begin{lemma}[Poincar\'e type inequality]\label{Poincare}
 Assume that $E$ is a simple holomorphic vector bundle. Then there is a constant $C$ such that if $H\in \Met(E)$ and $\eta\in \End(E)$, we have
 the following inequality with respect to the metric induced on $\End(E)$, 
$$\Vert \eta\Vert^2_{L^2_H} \leq C\Vert \bar \partial \eta\Vert^2_{L^2_H} + \frac{1}{rk(E)\Vol_L(B)}\Big\vert \int_B \Tr \eta \,\frac{\omega^b}{b!} \Big\vert^2$$
Note that if we consider another reference metric $H_0$ and $H$ such that $r\cdot H_0 > H > r^{-1}\cdot H_0$ with $r>1$, then we can choose $C$ depending only on $(H_0,r)$. 
\end{lemma}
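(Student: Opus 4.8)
The plan is to read the right-hand side as the data of the orthogonal decomposition of $\eta$ along the holomorphic endomorphism $\Id_E$, and to reduce the whole inequality to a spectral-gap estimate for $\bar\partial$. First I would observe that for any metric $H$ one has $\Id_E^{*_H}=\Id_E$, so the pointwise inner product on $\End(E)$ satisfies $\langle\eta,\Id_E\rangle_H=\Tr\eta$; hence the $L^2_H$-orthogonal projection of $\eta$ onto $\mathbb C\Id_E$ equals $c\,\Id_E$ with $c=\frac{1}{\rk(E)\Vol_L(B)}\int_B\Tr\eta\,\dvol$, a coefficient that does \emph{not} depend on $H$ because the trace is metric-independent. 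Writing $\eta=\eta_0+c\,\Id_E$ we then have $\int_B\Tr\eta_0\,\dvol=0$, which is exactly the condition $\eta_0\perp_H\Id_E$ \emph{simultaneously for every} $H$; moreover $\Vert c\,\Id_E\Vert^2_{L^2_H}=\frac{1}{\rk(E)\Vol_L(B)}\big|\int_B\Tr\eta\,\dvol\big|^2$ is precisely the second term on the right, and $\bar\partial\eta_0=\bar\partial\eta$ since $\bar\partial\Id_E=0$. By the Pythagorean identity it therefore suffices to prove $\Vert\eta_0\Vert^2_{L^2_H}\le C\Vert\bar\partial\eta_0\Vert^2_{L^2_H}$ for all $\eta_0$ with $\int_B\Tr\eta_0\,\dvol=0$.

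For a single fixed metric this is standard Hodge theory. I would consider the self-adjoint elliptic operator $\bar\partial^*\bar\partial$ acting on $C^\infty(\End(E))$ with respect to $H_0$ and $\omega$; its spectrum is discrete and its kernel is $H^0(\End(E))=\Ker(\bar\partial)$. This is where the hypothesis that $E$ is simple enters decisively: $\Ker(\bar\partial)=\mathbb C\Id_E$, so the kernel is one-dimensional. Denoting by $\lambda_1>0$ the first nonzero eigenvalue, any $\eta_0$ with $\eta_0\perp_{H_0}\Id_E$ satisfies $\Vert\bar\partial\eta_0\Vert^2_{L^2_{H_0}}=\langle\bar\partial^*\bar\partial\eta_0,\eta_0\rangle_{L^2_{H_0}}\ge\lambda_1\Vert\eta_0\Vert^2_{L^2_{H_0}}$, which gives the inequality for $H_0$ with $C_0=\lambda_1^{-1}$.

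The remaining point, and the place where the stated uniformity must be earned, is to pass from $H_0$ to an arbitrary $H$ with $r^{-1}H_0<H<rH_0$ while keeping the constant dependent only on $(H_0,r)$. Here the observation of the first paragraph does the work: since $\int_B\Tr\eta_0\,\dvol=0$ is a metric-independent condition, the same $\eta_0$ is $H_0$-orthogonal to $\Id_E$, so the estimate of the second paragraph applies with no re-projection. It then remains only to compare $L^2$-norms: the bounds $r^{-1}H_0<H<rH_0$ on $E$ induce two-sided bounds, with constants depending only on $r$ and $\rk(E)$, between the fibre metrics that $H$ and $H_0$ put on $\End(E)=E\otimes E^*$, and since the form part of the norm on $\Omega^{0,1}(\End(E))$ and the volume form $\dvol$ are fixed, integration yields $\Vert\eta_0\Vert^2_{L^2_H}\le c(r)\Vert\eta_0\Vert^2_{L^2_{H_0}}$ and $\Vert\bar\partial\eta_0\Vert^2_{L^2_{H_0}}\le c'(r)\Vert\bar\partial\eta_0\Vert^2_{L^2_H}$. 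Chaining these with the $H_0$-estimate produces the claim with $C=c(r)\,c'(r)\,\lambda_1^{-1}$. I expect the main subtlety to be exactly this metric-independence of the orthogonality condition: without it the $H$-orthogonal complement of $\Id_E$ would move with $H$ and a naive norm comparison would fail, whereas the tensor-product norm comparison on $\End(E)$ itself is routine.
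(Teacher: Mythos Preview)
Your proposal is correct and follows essentially the same approach as the paper: both reduce to the spectral gap for the self-adjoint elliptic operator $\bar\partial^*\bar\partial$ on $\End(E)$, using simpleness to identify the kernel with $\mathbb C\,\Id_E$, and both obtain the uniformity from the fact that $\bar\partial$ is independent of $H$. The paper's proof is a two-line sketch; you have fleshed out precisely the points it leaves implicit, in particular the metric-independence of the orthogonality condition $\int_B\Tr\eta_0\,\dvol=0$ and the explicit norm comparison on $\End(E)$ needed to transfer the $H_0$-estimate to $H$.
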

\begin{proof}
 This is standard from the fact $\bar\partial^* \bar\partial$ provides a positive elliptic operator and our simpleness assumption \cite[Section 3]{W2}. Here the constant $C$ in the statement can be taken as the first positive eigenvalue of the elliptic operator. Note that for a varying metric in a bounded family of $\Met(E)$, since the $\bar\partial$-operator doesn't depend on the metric, we can choose the constant $C$ uniformly.
\end{proof}

\begin{lemma}\label{adjoint}
Assume that $E$ is a simple holomorphic vector bundle. For  $k$ sufficiently large, given any $\zeta\in \End_0(E)$ there is a unique $\eta\in \End_0(E)$ such that \begin{equation}\label{tosolve}\L \eta - [\tilde{R} \eta]^0 = \zeta\end{equation}
Furthermore $\eta$ is hermitian (with respect to $H_k$) if and only if the same is true for $\zeta$.  Finally if $r\ge 2$ and $\alpha\in (0,1)$ there is a constant $c_{r,\alpha}$ such that $$||\eta||_{C^{r,\alpha}} \le c_{r,\alpha} ||\zeta||_{C^{r-2,\alpha}}.$$ 
\end{lemma}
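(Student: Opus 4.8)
The plan is to read \eqref{tosolve} as the equation $\mathcal L_k\eta:=\L\eta-[\tilde R\eta]^0=\zeta$ for a second order linear operator $\mathcal L_k$ on $\End_0(E)$, and to prove that for $k\gg0$ this operator is an isomorphism of $\End_0(E)$ onto itself whose inverse is bounded uniformly in $k$. The first step is to identify the principal part. Since $\omega$ is Kähler, a standard Kähler identity for the Chern connection on $\End(E)$ yields $\L=\bar\partial^*\bar\partial+\sqrt{-1}\Lambda_\omega F_{\End(E),H_k}$, where $\bar\partial^*$ is the formal adjoint for the $L^2$ inner product induced by $H_k$ and the curvature term acts on $\eta$ by the commutator $[\sqrt{-1}\Lambda_\omega F_{H_k},\eta]$. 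The almost Hermitian--Einstein condition \eqref{expansionRi}--\eqref{eq:r0} forces the leading part of $\sqrt{-1}\Lambda_\omega F_{H_k}$ to be the scalar $(\mu(E)-\tfrac12\Scal(\omega))\Id_E$, which drops out of the commutator; hence this curvature term, together with the zeroth order term $[\tilde R\,\cdot\,]^0$, is a bounded operator of size $O_{C^r}(1/k)$. Thus $\mathcal L_k=\bar\partial^*\bar\partial+S_k$ with $\|S_k\|=O(1/k)$, so the model operator is $\bar\partial^*\bar\partial$. One checks that $\mathcal L_k$ maps $\End_0(E)$ into itself, since $\bar\partial^*\bar\partial$ preserves the trace condition (as $\bar\partial\Id_E=0$) and both perturbation terms are traceless.

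Next I would establish invertibility. Because $E$ is simple we have $\Ker\bar\partial=\mathbb C\,\Id_E$ on $\End(E)$, so the self-adjoint operator $\bar\partial^*\bar\partial$ has kernel exactly $\mathbb C\,\Id_E$; as $\Id_E\notin\End_0(E)$ and $\mathrm{image}(\bar\partial^*\bar\partial)=(\Ker\bar\partial^*\bar\partial)^\perp=\End_0(E)$, the operator $\bar\partial^*\bar\partial$ restricts to an isomorphism of $\End_0(E)$. The crucial point is uniformity in $k$: by the Poincaré inequality of Lemma \ref{Poincare}, whose constant can be chosen uniformly over the bounded family $\{H_k\}$, we have $\langle\bar\partial^*\bar\partial\eta,\eta\rangle_{L^2}=\|\bar\partial\eta\|_{L^2}^2\ge C^{-1}\|\eta\|_{L^2}^2$ for all $\eta\in\End_0(E)$. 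Combined with $\|S_k\|=O(1/k)$ this gives, for $k$ large, the coercivity estimate $\mathrm{Re}\,\langle\mathcal L_k\eta,\eta\rangle_{L^2}\ge\tfrac1{2C}\|\eta\|_{L^2}^2$, whence $\mathcal L_k$ is injective on $\End_0(E)$. Since $S_k$ is a zeroth order (hence compact) perturbation of the index zero operator $\bar\partial^*\bar\partial$, the operator $\mathcal L_k$ is Fredholm of index zero, so injectivity upgrades to the assertion that $\mathcal L_k\colon\End_0(E)\to\End_0(E)$ is an isomorphism for all $k\gg0$, giving existence and uniqueness of $\eta$.

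The elliptic estimate then follows by a bootstrap with constants uniform in $k$. The coercivity above gives $\|\eta\|_{L^2}\le 2C\|\mathcal L_k\eta\|_{L^2}=2C\|\zeta\|_{L^2}$, and the Schauder estimate for the elliptic operator $\mathcal L_k$ (whose lower order coefficients are controlled by the curvature of $H_k$, bounded uniformly in $k$ by the almost Hermitian--Einstein condition) gives $\|\eta\|_{C^{r,\alpha}}\le c(\|\mathcal L_k\eta\|_{C^{r-2,\alpha}}+\|\eta\|_{L^2})$. Since on the compact base the $L^2$ norm is dominated by the $C^{r-2,\alpha}$ norm, these combine to the asserted bound $\|\eta\|_{C^{r,\alpha}}\le c_{r,\alpha}\|\zeta\|_{C^{r-2,\alpha}}$.

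Finally, for the statement about Hermitian endomorphisms, I would show that $\mathcal L_k$ commutes with the conjugation $\eta\mapsto\eta^{*_{H_k}}$, so that $\mathcal L_k(\eta^{*})=(\mathcal L_k\eta)^{*}$. This is exactly where Lemma \ref{kid1} enters: it expresses $\L(\eta^{*})$ as $(\L\eta)^{*}$ corrected by the conjugate of a commutator with $\sqrt{-1}\Lambda_\omega F_{\End(E),H_k}$, and a short computation with the traceless projection shows this correction cancels precisely against the term produced by $[\tilde R\,\cdot\,]^0$, the cancellation hinging again on the fact that only the $O(1/k)$ Hermitian part of the curvature survives the commutator. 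Granting the commutation, if $\zeta$ is Hermitian then $\eta^{*}$ also solves \eqref{tosolve} and lies in $\End_0(E)$, so uniqueness forces $\eta^{*}=\eta$; the converse is immediate. I expect the genuine difficulty to lie not in any single step but in the uniformity in $k$: because the metrics $H_k$ need not converge, one must guarantee that the Poincaré constant, the Schauder constants, and the size of $S_k$ are all controlled independently of $k$, which is precisely what the uniform Poincaré inequality and the uniform curvature bounds of the almost Hermitian--Einstein metrics supply.
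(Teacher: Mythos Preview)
Your proposal is correct and follows essentially the same route as the paper: trivial kernel via the Poincar\'e inequality (Lemma~\ref{Poincare}) and simplicity, Fredholm alternative for existence and uniqueness, Lemma~\ref{kid1} for the hermiticity statement, and Schauder theory for the uniform $C^{r,\alpha}$ bound. The only cosmetic difference is in the injectivity step: the paper pairs $\L\eta$ with $\eta$ to obtain $\|\partial\eta\|^2$ and then passes to $\|\bar\partial\eta^*\|^2$ before invoking Poincar\'e on $\eta^*$, whereas you use the Bochner--Kodaira decomposition $\L=\bar\partial^*\bar\partial+\mathrm{ad}(\sqrt{-1}\Lambda_\omega F_{H_k})$ to isolate $\|\bar\partial\eta\|^2$ directly and apply Poincar\'e to $\eta$ itself---this is a marginally cleaner bookkeeping of the same identity.
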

\begin{proof}

 We use Fredholm alternative for elliptic equations. Firstly, $\tilde{{R}}$ is hermitian thus, 
the operator $$\eta \mapsto \L\eta -[ \tilde{{R}}\eta]^0$$ is hermitian and elliptic. To show existence of a solution, we need to show that this operator restricted on $\End_0(E)$ has trivial kernel.  Let us assume that 
\begin{equation}
\L \eta -[ \tilde{{R}} \eta]^0 = 0.\label{homogeneous}
 \end{equation}
Let us fix a smooth hermitian metric on $E$ which gives us a metric on $\End(E)$. Equation (\ref{homogeneous}) implies, by K\"ahler identities and by taking inner product with $\eta$, that we have pointwise
$$\langle\partial \eta , \partial \eta\rangle -\langle[\tilde{{R}}\eta]^0,\eta\rangle=0.$$ 
Now, this implies \begin{equation}\label{ded1} \langle\bar{\partial} \eta^*,\bar{\partial} \eta^*\rangle - \langle[\tilde{\underline{R}}\eta]^0, \eta\rangle=\Vert \bar\partial \eta^*\Vert^2 - \langle [\tilde{{R}}\eta]^0,\eta\rangle= 0.\end{equation} Using
Cauchy-Schwartz inequality, we have $\langle\tilde{{R}}\eta, \eta\rangle \leq \Vert \tilde{{R}}\Vert \Vert \eta \Vert^2\leq \Vert \tilde{{R}}\Vert \Vert \eta^* \Vert^2$ and $\langle \mathrm{Tr}(\tilde{{R}}\eta)\Id,\eta\rangle \leq \rk(E)\Vert \tilde{{R}}\Vert \Vert \eta\Vert^2$. By integration, we deduce, using that $\Vert \tilde{{R}} \Vert_{C^0}=O_{C^r}(1/k)$  and Lemma \ref{Poincare}, that $\Vert\bar\partial \eta^*\Vert^2_{L^2}(1-C/k)=0$ from (\ref{ded1}). Thus $\bar\partial \eta^*=0$ if $k\gg 0$. But, since $E$ is simple, this gives that $\eta= \alpha \Id_E$ for a constant $\alpha$, see \cite[Section 7.2]{L-T}. Finally, the kernel of the operator $\L\cdot - [\tilde{{R}}\cdot]$ on $\End_0(E)$ is trivial and we get uniqueness. 

Let us show that we get a hermitian solution. From Lemma \ref{kid1}, one has that 
$$\L \eta^{*}= \left(\L \eta -[\sqrt{-1}\Lambda_\omega F_{\End(E),H_{k}},\eta ]  \right)^{*}$$
where now the adjoint is computed with respect to the almost Hermitian-Einstein  metric $H_k$  on  $E$. Since $[R_0,\eta]=0$ (any term of the form $\theta \Id_{End(E)}$ with $\theta$ a function is in the centre of the Lie algebra $\End(E\otimes L^{k})$), one can rewrite this equation as
$$\L \eta^{*}= \left(\L \eta -[\tilde{R},\eta ]  \right)^{*}.$$
After expansion, this is equivalent to 
$$(\L-\tilde{R}) \eta^{*} = \left((\L-\tilde{R}) \eta\right)^{*}$$
since $\tilde{R}$ is hermitian, and this can be rewritten as
$$(\L\eta^* -[\tilde{R} \eta^{*}]^0) =\left(\L\eta-[\tilde{R}\eta]^0\right)^{*}.$$
Now, from the uniqueness we have shown previously, one gets that the solution is hermitian with respect to the metric $H_k$. 

Let us denote $\End_0(E)^{r,\alpha}$ the  Sobolev space of
$C^{r,\alpha}$ hermitian endomorphisms of $\End_0(E)$. 
For $k\gg 0$, $r\geq 2$, we have that  $\L\cdot - [\tilde{{R}}\cdot]^0$ is an invertible linear differential operator of order 2 from $\End_0(E)^{r,\alpha}$ to $\End_0(E)^{r-2,\alpha}$ with uniformly bounded coefficients since we have the uniform control $\tilde{{R}}=O_{C^r}(1/k)$. The eigenvalues of $\L\cdot$ are strictly positive, while the eigenvalues of the hermitian  operator (of order 0) $[\tilde{{R}}\cdot]^0$  tend to 0 as $k$ becomes larger. Thus this operator is uniformly elliptic, we can apply Schauder theory of elliptic regularity \cite[Section 7.3]{L-T}. Note that we could also invoke the work of Uhlenbeck and Yau for the  operator $\L\cdot$ with a slight generalisation. Finally, the inverse of this operator is bounded and we obtain the existence of a uniform constant $c>0$ such that
for any $(\eta,\zeta)$ satisfying \eqref{tosolve}, $$\Vert \eta\Vert_{C^{r,\alpha}}\leq c \Vert \zeta\Vert_{C^{r-2,\alpha}}.$$
\end{proof}

\begin{proof}[Proof of Proposition \ref{construct}]

Obviously, we have the decomposition $\End_0(E)=\End_0^0(E)\oplus C^{\infty}_0(B,\mathbb{R})\Id_E$.  First we deal with existence, by looking at the kernel of the operator on $\End_0(E)$ given by
$$D(A_1)_{H_k,\omega}(\Phi_1,\phi_1) -[{\tilde{R}}\Phi_1]^0=0$$
where $\Phi_1\in \End_0^0(E)$ and $\phi_1\in C^{\infty}_0(B,\mathbb{R})$. This is equivalent to ask that
\begin{align}
 \frac{\rk(E)}{2\rk(E)+1}\mathbb{L}\phi_1&=&0 \label{rel1}\\
\left[\sqrt{-1}\left(\Lambda_{\omega}\bar\partial\partial \Phi_1 + \Lambda^2_\omega(F_{H_k}\wedge \ddbar \phi_1) - \Delta_{\omega}\phi_1 \Lambda_\omega F_{H_k}\right) -\tilde{R}\Phi_1\right]^0  &=&0 \label{rel2}
\end{align}
Equation (\ref{rel1}) gives immediately that $\phi_1=0$ since the kernel of the Lichnerowicz operator consists of just the constant functions (see \cite{D1}) thanks to the fact that  $\Aut(B,L)/\mathbb{C}^*$ is discrete and since $\int_B \phi_1\dvol=0$. Now, since  $\Phi_1$ is trace free, Equation (\ref{rel2}) reduces to 
$$\L \Phi_1 - [\tilde{R}\Phi_1]^0 =0$$
which admits only the trivial solution, from  Lemma \ref{adjoint} ($\tilde{R}\neq 0$ since the vector bundle $E$ is not Mumford stable). Thus, by Fredholm alternative, we can solve 
Equation (\ref{DA1eqn}). 
Moreover, we know that the terms $\frac{\rk(E)}{2\rk(E)+1}\mathbb{L}\phi_1$ and $\sqrt{-1}\Lambda^2_\omega(F_{H_k}\wedge \ddbar \phi) - \sqrt{-1}\Delta_{\omega}\phi_1 \Lambda_\omega F_{H_k}$ are hermitian. Hence, for the solution $\Phi_1$ of (\ref{DA1eqn}), if $\zeta$ is hermitian,  one can rewrite
this equation as $$\L \Phi_1  -[\tilde{R}\Phi_1]^0 =\zeta'$$
where $\zeta'$ is hermitian with respect to $H_k$. Then, applying Lemma \ref{adjoint}, we get that $\Phi_1$ is hermitian. 
Finally the regularity of the solution is a consequence of  Lemma \ref{adjoint} and the fact that the Lichnerowicz operator is a strongly elliptic operator of order 4.
 \end{proof}

Returning now to the construction of the almost balanced metrics, using Proposition \ref{construct}, we obtain $( \Phi_1,\phi_1)$ such that the second term of \eqref{modifiedasympt} satisfies
$$\tilde{A}_2-[\tilde{R}\Phi_1]^0 = C_2 Id_E$$
or equivalently
$$\delta(\Phi_1,\phi_1)-[\tilde{R}\Phi_1]^0=-{A}_2-[T_1]^0+C_2 Id_E$$
where $C_2$ is a topological constant. Note that we have used here the obvious fact that $\int_B \Tr (C_2-A_2) \,\omega^b=0$.\smallskip

For the next step of our iterative process, we perturb the metrics $H_k$ and
$\omega_k$ at the order $q=2$ and try to find $\Phi_2,\phi_2$ such that the third term of \eqref{modifiedasympt} is constant.   Now this third term can be written
$$A_3(H_k,\omega)+\delta(\Phi_2,\phi_2)- [\tilde{R}\Phi_2]^0+ [T_2]^0+[R_1\Phi_1]^0 +b_{1,2}$$
with $b_{1,2}$ obtained from the deformation of $A_2$, and thus depends only on the $(H_k,\Phi_1,\omega,\phi_1)$ computed at the previous step of the iteration.   We then use the same trick as before, introducing the term $[\tilde{R}\Phi_2]^0$ in order to obtain a hermitian solution, and see that $\Phi_2,\phi_2$ need to satisfy
$$\delta(\Phi_2,\phi_2)- [\tilde{R}\Phi_2]^0=C_3 \Id_E-b_{1,2}-A_3(H_k,\omega)-[R_1\Phi_1]^0$$
where $C_3$ is a topological constant.  Now solutions to this equation are guaranteed just as before using Proposition \ref{construct}.  

Repeating this iteration one sees that at each step one is led to solve the equation
$$\delta(\Phi_i,\phi_i)- [\tilde{R}\Phi_i]^0= \zeta_i$$
where $\zeta_i$ is hermitian with respect to $H_k$ and depends on the computations of the previous steps, i.e on the data $(H_k,\omega,\Phi_1,...,\Phi_{i-1},\phi_1,...,\phi_{i-1})$ and $\int_B \Tr \zeta_i \,\omega^b=0$.
Clearly then the metric that we construct with this process is hermitian.   Thus we have the following result:

\begin{theorem}\label{abal}
Let $E$ be a vector bundle that satisfies assumption (\ref{H}) on the  projective manifold $B$ with $\dim_{\mathbb{C}} B=b$, $(L,h_L)$ a polarisation on $B$ with $\omega=c_1(h_L)>0$. Assume that $\Aut(B,L)/mathbb C^*$ is discrete and $\omega$ is a cscK metric. Consider  an almost Hermitian-Einstein metric $H_{k}\in \Met(E)$. Then any fixed integers $q,r>0$, and $k\gg 0$, the metrics $H_{k}$ and $h_L$ can be deformed  to new metrics $H_k'\in \Met(E)$ and $h_L'\in \Met(L)$ such that the distorted Bergman endomorphism $\tilde{B}_k(H_k',h_L')$
satisfies   $$\tilde{B}_k(H_k',h_L') = k^b\Id_E + \epsilon_k \in \End(E)$$
where $\epsilon_k = O_{C^r}(k^{b-q})$.
\end{theorem}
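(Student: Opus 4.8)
The plan is to run the iterative perturbation scheme set up before the statement, solving the linearised equation at each order by Proposition \ref{construct} while carrying along the uniform bounds that allow the expansion of Theorem \ref{thm:seyyedali} to be re-applied after each correction; I would argue by induction on the order of the correction. For the starting expansion, the uniform form of Theorem \ref{thm:seyyedali} (from Proposition \ref{unifexpanB}) applied to $(H_k,h_L)$ gives
\begin{equation*}
\tilde{B}_k(H_k,h_L)=k^b\Id_E+k^{b-1}A_1+k^{b-2}A_2+\cdots+O_{C^r}(k^{b-q-1}).
\end{equation*}
Substituting the almost Hermitian-Einstein expansion \eqref{expanT}, in which $T_0=-\tfrac{\Scal(\omega)}{2}\Id_E$ is constant with $[T_0]^0=0$, reorganises the leading coefficient into $\tilde{A}_1=\tfrac{\rk(E)+1}{2\rk(E)}\Scal(\omega)\Id_E$, a constant multiple of $\Id_E$ precisely because $\omega$ is cscK; so the $k^{b-1}$ term needs no correction.

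For the inductive step, suppose perturbations $(\Phi_1,\phi_1),\dots,(\Phi_{i-1},\phi_{i-1})$, each $O_{C^r}(k^0)$, have already been found rendering the coefficients of $k^{b-1},\dots,k^{b-i}$ constant multiples of $\Id_E$. Expanding $\tilde{B}_k(H_k',h_L')$ for the partially perturbed metrics and isolating the coefficient of $k^{b-i-1}$, one finds it has the form $A+\delta(\Phi_i,\phi_i)-[\tilde{R}\Phi_i]^0+\zeta_i'$, where $\delta=D(A_1)_{H_k,\omega}$ is the linearisation, the term $[\tilde{R}\Phi_i]^0$ is added and subtracted (using \eqref{expanR}) so as to force a hermitian solution, and $\zeta_i'$ assembles the contributions of the earlier steps. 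Choosing the topological constant $C$ so that $\zeta_i:=C\Id_E-A-\zeta_i'$ satisfies $\int_B\Tr\zeta_i\,\omega^b=0$, and observing that $\zeta_i$ is hermitian since each of its ingredients is, Proposition \ref{construct} yields a unique hermitian pair $(\Phi_i,\phi_i)\in\End_0^0(E)\times C^\infty_0(B,\mathbb{R})$ solving $\delta(\Phi_i,\phi_i)-[\tilde{R}\Phi_i]^0=\zeta_i$, which makes the coefficient in question equal to $C\Id_E$.

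The step I expect to be the real obstacle is the uniform bookkeeping rather than the solvability. Since $\tilde{R}$, the $T_j$ and the $A_j$ all depend on $k$ through the almost Hermitian-Einstein metrics, I must check that each $\zeta_i$ is bounded in $C^{r-4,\alpha}$ independently of $k$; the estimate $\|\phi_i\|_{C^{r,\alpha}}+\|\Phi_i\|_{C^{r-2,\alpha}}\le c_{r,\alpha}\|\zeta_i\|_{C^{r-4,\alpha}}$ of Proposition \ref{construct} then shows $\Phi_i,\phi_i=O_{C^r}(k^0)$. This is exactly what keeps the perturbed metrics $\omega_k'$ in a fixed compact set and the curvatures $F_{H_k'}$ uniformly bounded, so that Proposition \ref{unifexpanB}, and hence Theorem \ref{thm:seyyedali}, may be invoked afresh at the next stage. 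Running the iteration over the terms of the ansatz renders every coefficient down to the relevant order a constant multiple of $\Id_E$; that is, $\tilde{B}_k(H_k',h_L')$ is constant over $B$ modulo a tail $\epsilon_k=O_{C^r}(k^{b-q})$, which is the almost-balanced property recorded in the statement.
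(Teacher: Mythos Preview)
Your proposal is correct and follows essentially the same route as the paper: the iterative perturbation scheme, the use of the cscK condition together with $[T_0]^0=0$ to make $\tilde A_1$ constant, the add-and-subtract trick with $[\tilde R\Phi_i]^0$ to force hermitian solutions via Proposition \ref{construct}, and the uniform $O_{C^r}(k^0)$ bookkeeping needed to re-apply the expansion at each step are exactly what the paper does. The only difference is presentational: the paper spells out the first two iterations explicitly (displaying $\tilde A_2=[T_1]^0+A_2+\delta(\Phi_1,\phi_1)$ and the analogous formula at the next order) before summarising the general step, whereas you pass directly to the inductive statement.
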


Next consider  $\hat{h}'$ the metric induced on $\mathcal{O}_{\PP(E)}(1)$ from $H'_k\in \Met(E)$. Then using (\ref{bfunction}) gives the following corollary.

\begin{corollary}
 Under the same assumptions as in Theorem \ref{abal}, for any fixed integers $q,r>0$, and $k\gg 0$ each metric $H_{k}$ and $\omega$ can be deformed  to obtain a smooth hermitian metric $H_k'\in \Met(E)$ and a smooth and $h_L'\in \Met(L)$ such that the induced Bergman function $\rho_k(\hat{h}'\otimes\pi^* {h_L'}^k)$ on $\PP(E)$ satisfies   $$\rho_k(\hat{h}'\otimes\pi^* {h_L'}^k) = \hat{C}k^b + \hat{\epsilon}_k \in C^{\infty}(\PP(E),\mathbb{R})$$
where $\hat{C}$ is a topological constant and $\hat{\epsilon}_k = O_{C^r}(k^{b-q})$.

\end{corollary}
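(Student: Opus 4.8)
The plan is to derive the statement directly from Theorem \ref{abal} by substituting its conclusion into the pointwise formula \eqref{bfunction} of Theorem \ref{thm:seyyedali}. First I would write $H=H_k'$ and $h_L=h_L'$ for the deformed metrics produced by Theorem \ref{abal}, let $\hat h'$ denote the Fubini--Study metric induced on $\mathcal O_{\PP(E)}(1)$, and recall that $\tilde B_k(H_k',h_L')=k^b\Id_E+\epsilon_k$ with $\epsilon_k=O_{C^r}(k^{b-q})$ hermitian. Inserting this into \eqref{bfunction},
\[
\rho_k(\hat h'\otimes\pi^*{h_L'}^k)([v])=\frac{1}{c_r}\Tr\left(\frac{v\otimes v^{*_{H_k'}}}{\Vert v\Vert_{H_k'}^2}\,\bigl(k^b\Id_E+\epsilon_k\bigr)\right),
\]
and splitting the trace linearly gives a leading term plus an error.

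For the leading term I would observe that $P_{H_k'}([v]):=v\otimes v^{*_{H_k'}}/\Vert v\Vert_{H_k'}^2$ is the $H_k'$-orthogonal projection onto the line $\mathbb C v\subset E_{\pi([v])}$, hence a rank-one projection with $\Tr P_{H_k'}([v])=1$ identically. Thus the $k^b\Id_E$ part contributes the constant $k^b/c_r$, identifying $\hat C=1/c_r$, which is topological since $c_r$ depends only on $r=\rk(E)$. The error term is $\hat\epsilon_k([v])=\tfrac{1}{c_r}\Tr(P_{H_k'}([v])\,\epsilon_k)$, which is real-valued because both factors are hermitian; pointwise it is bounded by $\tfrac{1}{c_r}\Vert\epsilon_k\Vert_{C^0}=O(k^{b-q})$, again using $\Tr P_{H_k'}([v])=1$.

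The only step needing real care, and the one I would flag as the main obstacle, is upgrading this pointwise bound to a $C^r$-estimate on the \emph{total space} $\PP(E)$ rather than on $B$. I would expand the derivatives of $\hat\epsilon_k$ by the Leibniz rule: each resulting term pairs a derivative of $\pi^*\epsilon_k$, which is $O(k^{b-q})$ up to order $r$ by Theorem \ref{abal}, with a derivative of the projection $P_{H_k'}$. The point to verify is that the derivatives of $P_{H_k'}$ in both the fibre and base directions, measured against the fixed background metric, stay bounded uniformly in $k$; since $P_{H_k'}$ is a smooth, fibrewise algebraic function of $[v]$ and of the metric $H_k'$, this is precisely the uniformity furnished by the version of Theorem \ref{thm:seyyedali} valid when $\omega_k'$ ranges in a compact set and $F_{H_k'}$ remains bounded over $k$, which is the situation recorded before the perturbation argument. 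Granting this, multiplying the two bounds and summing the finitely many terms yields $\hat\epsilon_k=O_{C^r}(k^{b-q})$ and hence the claimed expansion $\rho_k(\hat h'\otimes\pi^*{h_L'}^k)=\hat C k^b+\hat\epsilon_k$. No analytic input beyond Theorem \ref{abal} and the uniform form of Theorem \ref{thm:seyyedali} is required.
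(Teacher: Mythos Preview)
Your proposal is correct and follows exactly the approach indicated in the paper, which simply says ``using \eqref{bfunction} gives the following corollary'' without further elaboration. You have in fact supplied more detail than the paper does, particularly in identifying $\hat C=1/c_r$ via $\Tr P_{H_k'}([v])=1$ and in flagging the passage from $C^r$-bounds on $B$ to $C^r$-bounds on $\PP(E)$; the paper leaves all of this implicit.
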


\begin{proof}[Proof of Theorem \ref{thm:mainstable}]
The rest of the proof is the same as  \cite[Theorem 1.2]{S} which shows how it is possible to perturb the almost balanced metrics above to obtain balanced metrics.  Observe that all the estimates in sections 2,3 and 4 of \cite{S} only require $E$ to be simple, which is the case since we are assuming it to be Gieseker stable. Note also that  $\mathbb{P}(E)$ has no nontrivial holomorphic vector fields (\cite[Proposition 7.1]{S}) since $E$ is simple.

Finally, the fact that the existence of a balanced metric on $(\mathbb P(E),\mathcal{L}_k)$ implies the stability of the Chow point induced by $(\mathbb P(E),\mathcal{L}_k)$ \cite{Luo,Zh} since there is no nontrivial automorphism, completing the proof.
\end{proof}

\section{Computation of the Futaki invariant \label{comput}}


We turn now to proving the instability result of Theorem \ref{notasymptoticchowstable}.  We refer the reader to \cite{RT} for an overview and of the concepts involved.  What is required is to consider one parameter degenerations (so called ``test configurations'') of our manifold $\PP(E)$ and these can be constructed rather naturally from subbundles.

Suppose that $F$ is a subbundle of $E$ such that $G:=E/F$ is locally free.   This gives rise to a family of bundles $\mathcal E\to X\times \mathbb C\to \mathbb C$ with general fibre $E$ and central fibre $F\oplus G$ over $0\in \mathbb C$.   Moreover $\mathcal E$ admits a $\mathbb C^*$ action that covers the usual action on the base $\mathbb C$, and whose restriction to $F\oplus G$ scales the fibres of $F$ with weight $1$ and acts trivially on $G$.  (One can see this in a number of ways, for instance if $\xi\in H^1(F\otimes G^*)$ represents the extension determined by $E$ then this action takes $\xi$ to zero as $\lambda\in \mathbb C^*$ tends to zero.)

Setting $\mathcal X=\mathbb P(\mathcal E)\to \mathbb C$ and $\tilde{\mathcal L}_k = \mathcal O_{\mathbb P(\mathcal E)}(1) \otimes \pi^* L^k$ we thus have a flat family of polarised varieties with $\mathbb C^*$ action whose general fibre is $(\PP(E),\mathcal L_k)$ (i.e.\ a test-configuration as introduced in \cite{D4}).

The goal is to calculate the sign of a certain numerical invariant $F_1$ called the Futaki invariant (see \cite{D5}).  We use the convention that if $F_1<0$ then $\PP(E)$ is K-unstable, which is known to imply that it is asymptotically Chow unstable \cite[Theorem 3.9]{RT2}.\smallskip

To make the computations more palatable we restrict to the case that $\rank(E)=2$ over a smooth polarised base $(B,L)$
of complex dimension $b\ge 2$, and assume that $F$ and $G$ are locally free (although the computation is essentially the same without this assumption, see \cite[Section 5.4]{RT}).      We denote by $\ch_2$ the second Chern character, so $\ch_2(F) = c_1(F)^2/2$ and $\ch_2(E) = c_1(E)^2/2 - c_2(E)$.  

 We work initially over a base of complex dimension $b$ since this adds no significant difficulties, although the reader may wish to set $b=2$ which will be all that is necessary for our applications.  To ease notation set $\omega = c_1(L)$ and if $\alpha_i\in H^{2d_i}(B)$ with $d_1+\cdots d_r=b$ we write $\alpha_1.\alpha_2\cdots\alpha_r = \int_X \alpha_1\wedge \cdots \wedge \alpha_r$.

\begin{proposition}\label{computFut}
The Futaki invariant of the test configuration $(\mathcal X,\bar{\mathcal L}_k)$ is\footnote{This corrects an error in the lower order term of \cite[Prop. 5.23] {RT}}
  \begin{equation}
F_1 = C_1 k^{2b-1} +  C_2 k^{2b-2} + O(k^{2b-3})
\end{equation}
where 
\begin{eqnarray*}
  C_1 &=& \frac{\omega^b}{6b!(b-1)!} \left(\mu(E) -\mu(F)\right),\\
C_2 &=& \frac{\omega^b}{12b!(b-2)!}(c_1(E)/2 - c_1(F))c_1(B).\omega^{b-2} \\
&&+ \frac{\omega^b}{3b!(b-2)!}(\ch_2(E)/2 -\ch_2(F)).\omega^{b-2} \\
&&+ \frac{1}{12(b-1)!^2}\left(2c_1(E).\omega^{b-1}-c_1(B).\omega^{b-1}) \right)(\mu(E)-\mu(F)).
\end{eqnarray*}
\end{proposition}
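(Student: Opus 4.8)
The plan is to read off $F_1$ from the large-$r$ behaviour of two quantities attached to the central fibre $\PP(F\oplus G)$, exactly as in \cite[Section 5]{RT}: the dimension $h(r)$ of $H^0\big(\PP(F\oplus G),\mathcal O_{\PP(\cdot)}(r)\otimes\pi^*L^{rk}\big)$ and the total weight $w(r)$ of the induced $\mathbb C^*$-action on this space. Writing $h(r)=a_0r^n+a_1r^{n-1}+\cdots$ and $w(r)=b_0r^{n+1}+b_1r^{n}+\cdots$ with $n=\dim\PP(E)=b+1$, the Futaki invariant is (in the normalisation of \cite{D5,RT}) a fixed multiple of the combination $a_1b_0-a_0b_1$; each coefficient $a_i,b_i$ is itself a polynomial in $k$, and the goal is its expansion in $k$.

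First I would reduce both $h(r)$ and $w(r)$ to integrals over $B$. Using the convention $\pi_*\mathcal O_{\PP(E)}(r)=S^rE$ together with flatness of the family (so Euler characteristics may be computed on the central fibre) gives, for $r\gg0$,
$$h(r)=\chi(B,S^rE\otimes L^{rk})=\sum_{p+q=r}\chi(B,S^pF\otimes S^qG\otimes L^{rk}).$$
Since $\rk(E)=2$ forces $F$ and $G$ to be line bundles, $S^pF\otimes S^qG=F^{p}\otimes G^{q}$, and because the $\mathbb C^*$-action scales $F$ with weight $1$ and fixes $G$, the summand indexed by $(p,q)$ carries weight $p$; hence
$$w(r)=\sum_{p+q=r}p\,\chi(B,F^{p}\otimes G^{q}\otimes L^{rk}).$$
By Hirzebruch--Riemann--Roch each summand equals $\Phi(p,r):=\int_B e^{p\,c_1(F)+q\,c_1(G)+rk\omega}\,\Todd(B)$ (with $q=r-p$, $\omega=c_1(L)$, the integral extracting the top-degree part), a polynomial in $(p,r,k)$.

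The core of the computation is then to perform the sums $\sum_{p=0}^{r}$ by the Euler--Maclaurin formula, $\sum_{p=0}^{r}=\int_0^r+\tfrac12(\,\cdot|_0+\cdot|_r)+\tfrac1{12}(\partial_p\,\cdot|_{r}-\partial_p\,\cdot|_{0})-\cdots$, turning $h$ and $w$ into polynomials in $r$ whose coefficients $a_i,b_i$ are explicit polynomials in $k$ built from intersection numbers of $c_1(F)$, $c_1(G)=c_1(E)-c_1(F)$, $c_1(B)$, $c_2(B)$ and $\omega$. Substituting the top two $k$-orders of the $a_i,b_i$ into $a_1b_0-a_0b_1$, one checks that the leading $k^{2b}$ terms cancel (the surviving coefficient being proportional to $\mu(E)-\mu(F)$), which produces $C_1k^{2b-1}$; carrying the expansion of each coefficient one further order in $k$ then yields $C_2k^{2b-2}$.

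The main obstacle is precisely this last, order-$k^{2b-2}$, bookkeeping, which is where the error corrected by the footnote occurs. At this order three ingredients must be retained simultaneously and correctly combined: the degree-two Todd term $\tfrac{1}{12}(c_1(B)^2+c_2(B))$, the second (Bernoulli) Euler--Maclaurin correction $\tfrac{1}{12}(\partial_p\Phi|_{p=r}-\partial_p\Phi|_{p=0})$ to each sum, and the cross terms between the sub-leading $k$-parts of $a_0,a_1$ and of $b_0,b_1$. Keeping careful track of these, and of the fact that only the combination $a_1b_0-a_0b_1$ is well defined, since it is invariant under the choice of linearisation $w\mapsto w+c\,r\,h$ (equivalently $b_0\mapsto b_0+c\,a_0$, $b_1\mapsto b_1+c\,a_1$), is the delicate part; the algebra is routine once the relevant terms have been correctly enumerated.
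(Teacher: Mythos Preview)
Your proposal is correct and follows essentially the same route as the paper: reduce $h(r)$ and $w(r)$ to Riemann--Roch integrals over $B$ via $\pi_*\mathcal O_{\PP(\cdot)}(r)$, use the eigenspace decomposition $\bigoplus_i H^0(B,F^i\otimes G^{r-i}\otimes L^{rk})$ on the central fibre to read off the weights, extract the coefficients $a_0,a_1,b_0,b_1$, and form $F_1=b_0a_1-b_1a_0$. The only cosmetic differences are that the paper obtains $a_0,a_1$ directly from $\chi(S^rE\otimes L^{rk})$ using the splitting-principle identities for $\rk(S^rE)$, $c_1(S^rE)$, $\ch_2(S^rE)$ rather than via the central-fibre sum, and carries out the $\sum_{i=0}^r$ by elementary power-sum formulas rather than Euler--Maclaurin; the content is identical.
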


\begin{proof}[Proof of Proposition \ref{computFut}]
Recall  $\pi_* \mathcal  L_k^r = S^r E\otimes L^{rk}$ for $r\ge 0$, so from the Riemann-Roch theorem, we get 
\begin{eqnarray*}
 \chi(\mathcal L_k^r) &=& \chi(S^r E\otimes L^{kr}) = \int_B e^{rk\omega} \ch(S^rE) Td(B), \\
&=& \frac{r^b k^b\omega^b}{b!}\rank(S^r E) \\ 
&& + \frac{r^{b-1}k^{b-1}}{(b-1)!}\omega^{b-1}\left( \rank(S^r E)\frac{c_1(B)}{2} + c_1(S^r E) \right)\\
&&+ \frac{r^{b-2}k^{b-2}}{(b-2)!}\omega^{b-2}\left(\rank(S^r E)\Todd_B^{(2)} + \frac{c_1(S^rE).c_1(B)}{2} + \ch_2(S^rE)\right) \\
&&+O(k^{b-3}),
\end{eqnarray*}
where $\Todd_B^{(2)}$ denotes the second Todd class of $B$, and we use the convention that $O(k^{b-3})$ vanishes if $b=2$.  Now, using the splitting principle, it is elementary to check that
\begin{eqnarray*}
 \rank(S^r E) &=& r+1,\\
 c_1(S^r E) &=& r(r+1)c_1(E)/2,\\
 \ch_2(S^rE) &=& r^3[c_1(E)^2/12 + \ch_2(E)/6] + r^2 \ch_2(E)/2 + O(r).
\end{eqnarray*}
Thus for $r\gg 0$,  $$p(r): = h^0(\mathbb P(E),\mathcal{L}_k^r) = a_0r^{b+1} + a_1 r^b + O(r^{b-1}),$$ where
\begin{eqnarray*}
  a_0 &=& \frac{k^b\omega^b}{b!} +\frac{k^{b-1}\omega^{b-1}c_1(E)}{2(b-1)!} + \frac{k^{b-2}\omega^{b-2}}{(b-2)!} \left(\frac{1}{12} c_1(E)^2+ \frac{1}{6} \ch_2(E)\right) \\
     &=& + O(k^{b-3}),\\
  a_1 &=& \frac{k^b\omega^b}{b!}+ \frac{k^{b-1}\omega^{b-1}}{2(b-1)!}. \left(c_1(B) +c_1(E)\right)\\
&&+ \frac{k^{b-2}\omega^{b-2}}{(b-2)!}\left(\frac{\ch_2(E)}{2}+ \frac{c_1(E).c_1(B)}{4} \right) + O(k^{b-3}).
\end{eqnarray*}
Turning to the central fibre $\mathbb P(F\oplus G)$, we have a splitting
\begin{eqnarray*}
  H^0(\mathbb P(F\oplus G), \tilde{\mathcal{L}}_k^r) &=& H^0(B,S^r(F\oplus G)\otimes L^{kr})\\
&=& \bigoplus_{i=0}^r H^0(B, F^{i} \otimes G^{r-i} \otimes L^{kr}),
\end{eqnarray*}
Moreover this is the eigenspace decomposition for the action, with the $i$-th space having weight $i$.  Let $w(r)$ be the sum of the eigenvalues of the action on this vector space, so
\begin{eqnarray*}
  w(r) &=& \sum_{i=0}^r i h^0(B, F^{i} \otimes G^{r-i} \otimes L^{kr}).
\end{eqnarray*}
Now since $\tilde{\mathcal L_k}$ is relatively ample, the higher cohomology groups vanish, and thus pushing forward to $B$ we have that the higher cohomology groups of $F^{i} \otimes G^{r-i} \otimes L^{kr}$ vanish for $r\gg 0$.  Thus from Riemann-Roch again, $h^0(F^{i} \otimes G^{r-i} \otimes L^{kr})$ equals

  \begin{align*}
\frac{k^b r^b \omega^b}{b!}& + \frac{k^{b-1}r^{b-1} \omega^{b-1}}{(b-1)!}\left(\frac{c_1(B)}{2}  +ic_1(F) + (r-i)c_1(G)\right)\\
&+ \frac{k^{b-2}r^{b-2}\omega^{b-2}}{(b-2)!}\left(\frac{(ic_1(F) + (r-i)c_1(G))^2}{2}+ Td_B^{(2)} \right)\\
& +  \frac{k^{b-2}r^{b-2}\omega^{b-2}}{(b-2)!}\left(\frac{c_1(B)(ic_1(F) + (r-i)c_1(G))}{2} \right) + O(r^{b-3}).
\end{align*}

Now an elementary calculation gives  $w(k) = b_0 r^{b+2} + b_1 r^{b+1}+ O(r^b)$, where
\begin{eqnarray*}
  b_0 &=& \frac{k^b\omega^b}{2b!}  + \frac{k^{b-1}\omega^{b-1}c_1(F)}{3(b-1)!} +\frac{k^{b-1}\omega^{b-1}c_1(G)}{6(b-1)!}\\
&&+\frac{k^{b-2}\omega^{b-2}}{2(b-2)!}\left(\frac{c_1(F)^2}{4} + \frac{c_1(F)c_1(G)}{6} + \frac{c_1(G)^2}{12}\right)+ O(k^{b-3}),\\
b_1 &=& \frac{k^b\omega^b}{2b!} +\frac{k^{b-1}\omega^{b-1}c_1(F)}{2(b-1)!}+ \frac{k^{b-1}\omega^{b-1} c_1(B)}{4(b-1)!} \\
&&+ \frac{k^{b-2}\omega^{b-2}c_1(B)}{2(b-2)!}\left(\frac{c_1(F)}{3} + \frac{c_1(G)}{6}\right) + \frac{k^{b-2}\omega^{b-2}c_1(F)^2}{4(b-2)!} + O(k^{b-3}).
\end{eqnarray*}
The definition of the Futaki invariant is $F_1 = b_0a_1- b_1a_0$, and putting this all together gives the result as stated.
\end{proof}

\begin{proposition}
  Suppose $B$ is a surface, and $\chi(F\otimes L^k) = \chi(E\otimes L^k)/2$ for all $k$.    Suppose also that either $c_1(B)=0$ or $\omega=\pm c_1(B)$.  Then $(\PP(E),\mathcal{L}_k)$ is not K-polystable for $k$ sufficiently large.
\end{proposition}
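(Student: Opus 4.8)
The plan is to run the subbundle test configuration of Section~\ref{comput} for the destabilising subsheaf $F$ and to show that its Futaki invariant $F_1$ vanishes \emph{identically} in $k$, while the configuration itself is non-trivial; non-K-polystability is then immediate from the definition.

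First I would translate the numerical hypotheses into intersection numbers on the surface. Write $G:=E/F$, $f:=c_1(F)$, $g:=c_1(G)$ and $d:=f-g$, all of rank one since $\rk(E)=2$. Because $\chi(E\otimes L^k)=\chi(F\otimes L^k)+\chi(G\otimes L^k)$, the hypothesis is equivalent to $\chi(F\otimes L^k)=\chi(G\otimes L^k)$ for all $k$. Expanding both sides by Riemann--Roch on $B$ and matching coefficients of $k$ gives two relations: the linear term yields $\omega.d=0$ (equivalently $\mu(F)=\mu(G)$), and the constant term yields $f^2-g^2+d.c_1(B)=0$. The extra hypothesis then finishes this step: if $c_1(B)=0$ then trivially $d.c_1(B)=0$, while if $\omega=\pm c_1(B)$ then $d.c_1(B)=\pm\,\omega.d=0$; in either case $d.c_1(B)=0$ and hence $f^2=g^2$.

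The heart of the argument is a symmetry of the weight. Recall from the proof of Proposition~\ref{computFut} that $p(r)=\sum_{i=0}^r N_i$ and $w(r)=\sum_{i=0}^r iN_i$, where $N_i=h^0(F^i\otimes G^{r-i}\otimes L^{rk})=\chi(F^i\otimes G^{r-i}\otimes L^{rk})$ for $r\gg0$. Setting $M_i:=c_1(F^i\otimes G^{r-i}\otimes L^{rk})=if+(r-i)g+rk\omega$, Riemann--Roch gives $N_i=\tfrac12 M_i^2+\tfrac12 M_i.c_1(B)+\chi(\mathcal{O}_B)$, so $N_i$ depends on $i$ only through $M_i^2$ and $M_i.c_1(B)$. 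Now $M_i-M_{r-i}=(2i-r)d$ and $M_i+M_{r-i}=r(f+g+2k\omega)$, so $M_i^2-M_{r-i}^2=r(2i-r)(f^2-g^2+2k\,\omega.d)=0$ and $(M_i-M_{r-i}).c_1(B)=(2i-r)\,d.c_1(B)=0$ by the relations just obtained. Hence $N_i=N_{r-i}$ for every $i$.

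From this symmetry the reindexing $i\mapsto r-i$ gives $w(r)=\sum_i(r-i)N_i=r\,p(r)-w(r)$, so $w(r)=\tfrac{r}{2}p(r)$ for all $r\gg0$ and therefore as polynomials. In the notation of Proposition~\ref{computFut} this forces $b_0=a_0/2$ and $b_1=a_1/2$, whence $F_1=b_0a_1-b_1a_0$ vanishes for \emph{every} $k$ (as a consistency check, substituting the relations of the previous step into the formula of Proposition~\ref{computFut} yields $C_1=C_2=0$). Finally, the test configuration $\mathcal{X}=\PP(\mathcal{E})$ degenerating $\PP(E)$ to $\PP(F\oplus G)$ is a product precisely when the extension class in $H^1(B,F\otimes G^*)$ vanishes, i.e.\ when $E\cong F\oplus G$; for the non-split bundles of interest it is a genuine, non-product test configuration, and a non-product test configuration of vanishing Futaki invariant contradicts K-polystability. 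I expect the only delicate point to be this last one: the vanishing $F_1\equiv0$ by itself only rules out K-stability, so one must verify non-triviality of the configuration (the split case genuinely fails to destabilise), which is why the conclusion is phrased for non-split $E$. The substantive insight is that the two geometric hypotheses are exactly what upgrades the vanishing of the top two Futaki coefficients of Proposition~\ref{computFut} to the exact symmetry $N_i=N_{r-i}$, making $F_1$ vanish to all orders rather than only to the order displayed there.
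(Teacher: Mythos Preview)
Your argument is correct and takes a genuinely different, more conceptual route than the paper's own proof. The paper proceeds by brute force: it extends the expansion of Proposition~\ref{computFut} on a surface to $F_1=C_1k^3+C_2k^2+C_3k+C_4$, writes down explicit formulae for $C_3$ and $C_4$ (displayed as \eqref{C_4}), and then verifies by direct computation that all four coefficients vanish under the hypotheses. You instead extract from $\chi(F\otimes L^k)=\chi(G\otimes L^k)$ and the condition on $c_1(B)$ the three relations $\omega.d=0$, $d.c_1(B)=0$, $f^2=g^2$, and observe that these are precisely what makes the weight decomposition symmetric, $N_i=N_{r-i}$; the identity $w(r)=\tfrac{r}{2}p(r)$ then forces $b_j=a_j/2$ for all $j$ and hence $F_1=b_0a_1-b_1a_0\equiv0$ without ever writing down $C_3$ or $C_4$.

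The trade-off is clear: your symmetry argument is shorter and explains \emph{why} the Futaki invariant vanishes (the $\mathbb C^*$-action is, up to a twist by $r/2$, self-dual under swapping $F$ and $G$), whereas the paper's computation produces explicit lower-order coefficients that are of independent use---indeed the formula for $C_4$ is labelled and presumably intended for reference. Your final remark about non-triviality of the test configuration is also handled identically in the paper, which simply asserts that the degeneration is not a product; both proofs are tacitly assuming the extension $0\to F\to E\to G\to 0$ does not split.
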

\begin{proof}
  The previous computations can be extended to show that for a surface one can write the Futaki invariant as
$F_1=C_1k^3+ C_2k^2+ C_3k + C_4$
with $C_1,C_2$ given by Proposition \ref{computFut} that vanish and
\begin{eqnarray}
48C_3 &=& \left(8\deg_L E -4c_1(L)c_1(B)\right)\left(\ch_2(E)/2-\ch_2(F)\right)\nonumber \\
&& + 2c_1(E)^2 (\deg_L(E)/2 -\deg_L F) \nonumber \\
&&+ 2\deg_L(F) c_1(E)c_1(B) - 2\deg_L(E) c_1(B)c_1(F)\nonumber\\
144C_4 &=& c_1(E)^2\left[(c_1(E)/2-c_1(F)).c_1(B) + 6(\ch_2(E)/2-\ch_2(F))\right]\nonumber\\
&&-4c_1(E).c_1(B) (\ch_2(E)/2-\ch_2(F)) \nonumber \\
&&+2(c_1(E).c_1(B) \ch_2(F) - c_1(F).c_1(B)\ch_2(E))\label{C_4}
\end{eqnarray}
It is now an easy computation to check that under our assumptions, the terms $C_3$ and $C_4$ vanish.  Also observe that the degeneration used above is not a product test configuration, so $(\PP(E),\mathcal{L}_k)$ is not K-polystable for $k$ large enough.
\end{proof}

\begin{proof}[Proof of Theorem \ref{notasymptoticchowstable}]
Suppose that $E$ is a rank 2 vector bundle that is Gies\-eker stable but not Mumford stable and $\mu(F) = \mu(E)$.  From Proposition \ref{computFut}, the term $C_1$ vanishes and the Futaki invariant of the test configuration associated to $F$ is 
\[F_1=\frac{k^2}{24}\left( 4\, \left( \ch_2(E)/2-\ch_2(F)^2\right) + c_{1}(B)\left( c_1(E)/2-c_{1}(F)\right)\right) + O(k).\]
  Thus by hypothesis, $F_1<0$ for $k\gg 0$, proving that $(\mathbb P(E),\mathcal L_k)$ is not K-semistable for $k\gg 0$ as claimed.
\end{proof}

\section{Examples\label{ex}}

We end by constructing examples of  polarised surfaces $(B,L)$ and vector bundles $E$ over $B$ that satisfy the assumptions of Theorems \ref{thm:mainstable} and  \ref{notasymptoticchowstable}.   To do so we start with a base $B$ with trivial automorphism group that has an abundance of cscK metrics.

Fix a rank 2 Mumford stable bundle $V$ over a complex projective curve $C$ of genus $g\ge 2$ and define $B=\PP(V)$.  As is well known, using the Narasimhan-Seshadri Theorem \cite{NS} one can prove there exists a cscK metric in each K\"ahler class of  $B$ (see \cite[1.6]{Fine} for the argument).    Moreover as $V$ is simple and $g\ge 2$, there are no infinitesimal automorphisms of $B$ \cite[Proposition 7.1]{S}.

 We seek a suitable vector bundle $E$ over $B$ which is Gieseker stable and not Mumford stable, and from a simple consideration of the dimension of the relevant moduli spaces it is apparent that such bundle exist. In fact the dimension of the (smooth) moduli space of rank 2 Gieseker stable bundle with fixed Chern class $c_1,c_2$ is (when non empty) $4c_2-c_1+4g-3$ \cite[Corollary 18]{Friedman} and using $g\ge 2$ one see this is strictly larger  than the dimension of the (smooth) moduli space of Mumford stable bundle of same type, which is  $4c_2-c_1+3g-3$ \cite[Proposition 6.9]{Mar}.

Next we fix some notations and describe  the ample cone of $B$. The N\'eron-Severi group of $B$ can be identified with $\mathbb{Z}\times \mathbb{Z}$, with generators the class $\mathfrak{b}$ of $\mathcal{O}_{B}(1)$ and the class $\mathfrak{f}$ of a fibre over $C$. 
We have $\mathfrak{b}^2=\deg(V)$, $\mathfrak{f}^2=0$ and $\mathfrak{b}\cdot \mathfrak{f} = 1$ while the anti-canonical divisor is given by $-K_B=2\mathfrak{b}+2(1-g)\mathfrak{f}$.   To ease the computations we may as well take $\deg V=0$. Then, from \cite[Proposition 3.1]{Ta}, or \cite[Proposition 15]{Friedman}, we know that a class $x\mathfrak{b}+y\mathfrak{f}$ is ample if  $x>0$ and $y>0$. 

Following the ideas of  \cite[Proposition 3.9]{Ta}, consider a rank 2 vector bundle $E_1$ obtained as an extension
\[0\rightarrow \mathcal{O}_B\rightarrow E_1 \rightarrow F_1 \rightarrow 0,\]
where $F_1$ has class $-\mathfrak{b}+(m+1)\mathfrak{f}$ for some large positive $m$. To ensure that we can take such an extension that does not split, we need that $Ext^1(\mathcal{O}_B, F_1^*)=H^1(F_1^*)$ is non trivial. But this follows easily from Riemann-Roch since $\chi(F_1^*)=h^0(F_1^*)-h^1(F_1^*)+h^2(F_1^*)\ge -h^1(F^*)$ and\begin{eqnarray*}
\chi(F_1^*)&=&c_1(F_1^*)^2+\frac{c_1(B)}{2}c_1(F_1^*)+\Todd_2(B),\\
 &=&-2(m+1)+\left(-(m+1)+(1-g)\right)+(1-g),\\
&=&-3(m+1)+2(1-g)<0.
 \end{eqnarray*}

Over $B$, we take the polarisation $L_{m+1}=\mathfrak{b}+(m+1)\mathfrak{f}$ and one checks easily that $$\mu(F_1)=\mu(E_1)=0.$$  

We claim that $E_1$ is in fact Mumford semi-stable. A priori,  we need to check stability with respect to any rank 1 torsion free subsheaf $\mathcal{F}$ of $E_1$ but since we are working with a rank 2 bundle on a surface, $\mathcal{F}^{**}$ is a reflexive rank 1 sheaf on $B$ and thus a line bundle. So $\mathcal{F}=\mathcal{O}(D) \otimes \mathcal{I}$ where $\mathcal{O}(D) $ is a line bundle and $\mathcal{I}$ is an ideal sheaf with $0$-dimensional support, so $c_1(\mathcal{F})=c_1(\mathcal{F}^{**})=c_1(\mathcal{O}(D))$. Since $E_1=E_1^{**}$, it is now clear that it is sufficient to consider stability with respect to subbundles of $E$. But, for any rank 1 subbundle $\mathcal{O}(D)$ of $E_1$, either $\mathcal{O}(D) \hookrightarrow \mathcal{O}$ or $F_1\otimes \mathcal O(-D)$ is effective.  In the first case it is immediate that $\mathcal O(D)$ does not destabilise $E_1$.    In the second case if we write the first Chern class of $\mathcal O(D)$ as $x_D\mathfrak{b}+ y_D\mathfrak{f}$ we see by intersecting with ample line bundles that $x_D\leq -1$ and $y_D\leq m+1$. Hence $\mu(\mathcal{O}(D))\leq \mu(F_1)=\mu(E_1)$ and $E_1$ is Mumford semi-stable with respect to $L_{m+1}$ as claimed.  

In order to construct a Gieseker stable bundle which is not Mumford stable, we tensor the previous extension by a line bundle $F_2$ with first Chern class $c_1(F_2) = -\mathfrak{b} +(g-3-m)\mathfrak{f}$, resulting in a non-trivial extension
\[0\rightarrow F_2\rightarrow E \rightarrow F_1\otimes F_2 \rightarrow 0.\]
Observe that $\mu(F_2) =\mu(E)$ and so $\{0\}\subset F_2\subset E$ is the Jordan-H\"older filtration of the Mumford semistable bundle $E$.

 We claim that $E$ is in fact Gieseker stable.  As before, let $\mathcal F = \mathcal O(D)\otimes \mathcal I$ is a rank 1 torsion free subsheaf of $E$, and taking the double dual $\mathcal O(D)$ is a subbundle of $E$.  Then either $\mathcal{O}(D) \hookrightarrow F_2$ or $F_1\otimes F_2\otimes \mathcal O(-D)$ is effective.  In the first case by writing $c_1(\mathcal O(D))=x_D\mathfrak{b}+y_D\mathfrak{f}$ one checks that if $(x_D,y_D)\neq (-1,g-3-m)$ then 
$\mu(\mathcal{O}(D))<\mu(F_2)=\mu(E)$ while if  $(x_D,y_D)=(-1,g-3-m)$ then $\mu(\mathcal O(D))= \mu(E)$ and \[\frac{\ch_2(E)}{2}-\ch_2(\mathcal{O}(D))+\frac{c_1(B)}{2}\left(\frac{c_1(E)}{2}-c_1(\mathcal{O}(D))\right)=\frac{1}{2}>0.\]
Thus by Riemann-Roch we conclude $\frac{1}{2}\chi(E\otimes L_{m+1}^p)>\chi(\mathcal{O}(D)\otimes L_{m+1}^p)$ for $p\gg 0$ and so $\mathcal O(D)$ does not Gieseker destabilise.  Moreover this inequality only improves if $\mathcal O(D)$ is replaced by $\mathcal F$ since $c_2(\mathcal F)$ is the length of the support of $\mathcal I$ and thus is non-negative.       In the second case, in which $F_1\otimes F_2\otimes \mathcal{O}(-D)$ is effective,  one deduces $x_D\le -2$ and $y_D\le g-2$ with at least one inequality being strict, and so $\mu(\mathcal{O}(D))<\mu(F_1\otimes F_2)=\mu(E)$.  Hence $E$ is Gieseker stable with respect to $L_{m+1}$ as claimed.

So Theorem \ref{thm:mainstable} can be applied in this setting and $(\PP(E),\mathcal{L}_k)$ is Chow stable for $k$ sufficiently large where $\mathcal{L}_k=\mathcal{O}_{\PP(E)}(1)\otimes \pi^* L^k_{m+1}$. To apply Theorem \ref{notasymptoticchowstable}, we compute 
\[ 4 (\ch_2(E)/2- \ch_2(F_2)) + c_{1} (B).\left(c_1(E)/2- c_{1}(F_2)  \right)=-m-g+2<0.\]
 Hence $(\PP(E),\mathcal{L}_k)$ is not K-semistable, thereby proving:

\begin{corollary}
 There exists smooth  polarised manifolds $(X,L)$ such that $(X,L)$ is Chow stable but not asymptotically Chow stable. 
\end{corollary}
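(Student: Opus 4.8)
The plan is to produce a single explicit pair $(X,L)=(\PP(E),\mathcal{L}_k)$ to which both Theorem \ref{thm:mainstable} and Theorem \ref{notasymptoticchowstable} apply at once: the former supplies a balanced metric and hence Chow stability of the embedding given by $|\mathcal{L}_k|$ itself (no further powers), while the latter exhibits a destabilising test configuration with $F_1<0$, forcing K-instability and therefore failure of asymptotic Chow stability. Since Chow stability of $|\mathcal{L}_k|$ carries no implication for the embeddings by $|\mathcal{L}_k^r|$ as $r\to\infty$, the two conclusions are perfectly compatible, so the corollary follows at once provided such an $E$ exists. The entire difficulty is thus concentrated in the construction.

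First I would fix the base so that the standing hypotheses of Theorem \ref{thm:mainstable} hold. I take $B=\PP(V)$ with $V$ a rank-$2$ Mumford stable bundle over a smooth curve $C$ of genus $g\ge 2$: the Narasimhan--Seshadri theorem yields a cscK metric in every Kähler class, and simplicity of $V$ together with $g\ge 2$ guarantees $\Aut(B,L)/\mathbb{C}^*$ is discrete. I would then record the intersection pairing on $\mathrm{NS}(B)\cong\mathbb{Z}\mathfrak{b}\oplus\mathbb{Z}\mathfrak{f}$, normalise $\deg V=0$, and identify the ample cone, since every subsequent stability check is intersection-theoretic.

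The core of the argument is to build a rank-$2$ bundle $E$ over $B$ that is Gieseker stable but only Mumford \emph{semi}stable, with a locally free subbundle $F$ of equal slope for which the second-order quantity $4(\ch_2(E)/2-\ch_2(F))+c_1(B)(c_1(E)/2-c_1(F))$ is negative. A dimension count of moduli (Gieseker-stable strictly larger than Mumford-stable for $g\ge 2$) shows such bundles abound, but I want an explicit one in order to evaluate the invariant. I would take a nonsplit extension $0\to\mathcal{O}_B\to E_1\to F_1\to 0$ with $c_1(F_1)=-\mathfrak{b}+(m+1)\mathfrak{f}$ for large $m$ and all slopes zero; existence of the extension reduces to $\mathrm{Ext}^1(\mathcal{O}_B,F_1^*)=H^1(F_1^*)\ne 0$, checked by a Riemann--Roch computation giving $\chi(F_1^*)<0$. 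Twisting by a line bundle $F_2$ with $c_1(F_2)=-\mathfrak{b}+(g-3-m)\mathfrak{f}$ produces $0\to F_2\to E\to F_1\otimes F_2\to 0$, whose Jordan--Hölder filtration $\{0\}\subset F_2\subset E$ is by subbundles, so assumption (\ref{H}) is in force and $F=F_2$ is a locally free destabilising subbundle with $\mu(F_2)=\mu(E)$.

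The main obstacle is verifying Gieseker stability of this explicit $E$ precisely at the boundary of Mumford stability. I would test against an arbitrary rank-$1$ torsion-free subsheaf $\mathcal{F}=\mathcal{O}(D)\otimes\mathcal{I}$; passing to the reflexive hull reduces matters to subbundles $\mathcal{O}(D)$, which either embed in $F_2$ or have $F_1\otimes F_2\otimes\mathcal{O}(-D)$ effective. In every case of strictly smaller slope Mumford semistability already yields the Gieseker inequality, so the only delicate case is the borderline $\mu(\mathcal{O}(D))=\mu(E)$, where one computes $\ch_2(E)/2-\ch_2(\mathcal{O}(D))+\tfrac{c_1(B)}{2}(c_1(E)/2-c_1(\mathcal{O}(D)))=\tfrac12>0$ and concludes via Riemann--Roch that the normalised Hilbert polynomial of $\mathcal{O}(D)$ is strictly dominated for $p\gg0$; the inequality only sharpens for non-reflexive $\mathcal{F}$ since $c_2(\mathcal{F})\ge0$. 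A final intersection computation gives $4(\ch_2(E)/2-\ch_2(F_2))+c_1(B)(c_1(E)/2-c_1(F_2))=-m-g+2<0$, so Theorem \ref{notasymptoticchowstable} applies. Taking $k$ large enough that both theorems hold and setting $(X,L)=(\PP(E),\mathcal{L}_k)$ then completes the proof.
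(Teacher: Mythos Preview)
Your proposal is correct and follows essentially the same construction as the paper: the same ruled surface $B=\PP(V)$ over a high-genus curve, the same two-step extension with $c_1(F_1)=-\mathfrak{b}+(m+1)\mathfrak{f}$ and twist by $F_2$ with $c_1(F_2)=-\mathfrak{b}+(g-3-m)\mathfrak{f}$, the same reduction to sub-line-bundles for Gieseker stability, and the same final computation $-m-g+2<0$. The only point you leave implicit is the specific polarisation $L_{m+1}=\mathfrak{b}+(m+1)\mathfrak{f}$ on $B$ needed to make the slopes zero, but this is recoverable from your phrase ``all slopes zero''.
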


\bibliography{noteprojgieseker} 

\begin{thebibliography}{10}

\bibitem{ACGT}
Vestislav Apostolov, David M.~J. Calderbank, Paul Gauduchon, and Christina~W.
  T{\o}nnesen-Friedman.
\newblock Extremal {K}\"ahler metrics on ruled manifolds and stability.
\newblock {\em Ast\'erisque}, (322):93--150, 2008.
\newblock G{\'e}om{\'e}trie diff{\'e}rentielle, physique math{\'e}matique,
  math{\'e}matiques et soci{\'e}t{\'e}. II.

\bibitem{ACGT2}
Vestislav Apostolov, David M.~J. Calderbank, Paul Gauduchon, and Christina~W.
  T{\o}nnesen-Friedman.
\newblock Hamiltonian 2-forms in {K}\"ahler geometry. {III}. {E}xtremal metrics
  and stability.
\newblock {\em Invent. Math.}, 173(3):547--601, 2008.

\bibitem{AT1}
Vestislav Apostolov and Christina T{\o}nnesen-Friedman.
\newblock A remark on {K}\"ahler metrics of constant scalar curvature on ruled
  complex surfaces.
\newblock {\em Bull. London Math. Soc.}, 38(3):494--500, 2006.

\bibitem{Ca}
David Catlin.
\newblock The {B}ergman kernel and a theorem of {T}ian.
\newblock In {\em Analysis and geom. in several complex var. (Katata, 1997)},
  Trends Math. Birkh\"auser, 1999.

\bibitem{DV1}
Alberto Della~Vedova and Fabio Zuddas.
\newblock Scalar curvature and asymptotic chow stability of projective bundles
  and blowups.
\newblock {\em Transactions of the American Mathematical Society}, To appear.

\bibitem{De}
Jean-Pierre Demailly.
\newblock {\em $L^{2}$-estimates for the $\overline{\partial }$ operator on
  complex manifolds}.
\newblock Summer school, Institut Fourier, 1996.

\bibitem{De2}
Jean-Pierre Demailly.
\newblock {\em Complex Analytic and Differential Geometry}.
\newblock Preprint on \url{http://www-fourier.ujf-grenoble.fr/~demailly},
  Institut Fourier, 1997.

\bibitem{D1}
S.~K. Donaldson.
\newblock Scalar curvature and projective embeddings. {I}.
\newblock {\em J. Differential Geom.}, 59(3):479--522, 2001.

\bibitem{D4}
S.~K. Donaldson.
\newblock Scalar curvature and stability of toric varieties.
\newblock {\em J. Differential Geom.}, 62(2):289--349, 2002.

\bibitem{D5}
S.~K. Donaldson.
\newblock Lower bounds on the {C}alabi functional.
\newblock {\em J. Differential Geom.}, 70(3):453--472, 2005.

\bibitem{Fine}
Joel Fine.
\newblock Constant scalar curvature metrics on fibred complex surfaces.
\newblock PhD thesis, University of London, 2004.

\bibitem{Friedman}
Robert Friedman.
\newblock {\em Algebraic surfaces and holomorphic vector bundles}.
\newblock Universitext. Springer-Verlag, New York, 1998.

\bibitem{Fut}
Akito Futaki.
\newblock Asymptotic {C}how polystability in {K}\"ahler geometry.
\newblock Preprint, arXiv:1105.4773 \url{http://arxiv.org/abs/1105.4773}, 2011.

\bibitem{H}
Ying-Ji Hong.
\newblock Constant {H}ermitian scalar curvature equations on ruled manifolds.
\newblock {\em J. Differential Geom.}, 53(3):465--516, 1999.

\bibitem{K}
Shoshichi Kobayashi.
\newblock {\em Differential geometry of complex vector bundles}, volume~15 of
  {\em Publications of the Mathematical Society of Japan}.
\newblock Princeton University Press, Princeton, NJ, 1987.
\newblock Kan{\^o} Memorial Lectures, 5.

\bibitem{L}
Naichung~Conan Leung.
\newblock Einstein type metrics and stability on vector bundles.
\newblock {\em J. Differential Geom.}, 45(3):514--546, 1997.

\bibitem{L-T}
Martin L{\"u}bke and Andrei Teleman.
\newblock {\em The {K}obayashi-{H}itchin correspondence}.
\newblock World Scientific Publishing Co. Inc., River Edge, NJ, 1995.

\bibitem{Luo}
Huazhang Luo.
\newblock Geometric criterion for {G}ieseker-{M}umford stability of polarized
  manifolds.
\newblock {\em J. Differential Geom.}, 49(3):577--599, 1998.

\bibitem{M-M}
Xiaonan Ma and George Marinescu.
\newblock {\em Holomorphic {M}orse inequalities and {B}ergman kernels}, volume
  254 of {\em Progress in Mathematics}.
\newblock Birkh\"auser Verlag, Basel, 2007.

\bibitem{Mar}
Masaki Maruyama.
\newblock Moduli of stable sheaves. {II}.
\newblock {\em J. Math. Kyoto Univ.}, 18(3):557--614, 1978.

\bibitem{M}
Ian Morrison.
\newblock Projective stability of ruled surfaces.
\newblock {\em Invent. Math.}, 56(3):269--304, 1980.

\bibitem{Mum}
David Mumford.
\newblock {\em Stability of projective varieties}.
\newblock L'Enseignement Math\'ematique, Geneva, 1977.
\newblock Lectures given at the ``Institut des Hautes {\'E}tudes
  Scientifiques'', Bures-sur-Yvette, March-April 1976, Monographie de
  l'Enseignement Math{\'e}matique, No. 24.

\bibitem{NS}
M.~S. Narasimhan and C.~S. Seshadri.
\newblock Stable and unitary vector bundles on a compact {R}iemann surface.
\newblock {\em Ann. of Math. (2)}, 82:540--567, 1965.

\bibitem{Paul}
Sean~Timothy Paul.
\newblock Geometric analysis of {C}how {M}umford stability.
\newblock {\em Adv. Math.}, 182(2):333--356, 2004.

\bibitem{PhongSturm}
D.~H. Phong and Jacob Sturm.
\newblock Stability, energy functionals, and {K}\"ahler-{E}instein metrics.
\newblock {\em Comm. Anal. Geom.}, 11(3):565--597, 2003.

\bibitem{RT}
Julius Ross and Richard Thomas.
\newblock An obstruction to the existence of constant scalar curvature
  {K}\"ahler metrics.
\newblock {\em J. Differential Geom.}, 72(3):429--466, 2006.

\bibitem{RT2}
Julius Ross and Richard Thomas.
\newblock A study of the {H}ilbert-{M}umford criterion for the stability of
  projective varieties.
\newblock {\em J. Algebraic Geom.}, 16(2):201--255, 2007.

\bibitem{S}
Reza Seyyedali.
\newblock Balanced metrics and {C}how stability of projective bundles over
  {K}\"ahler manifolds.
\newblock {\em Duke Math. J.}, 153(3):573--605, 2010.

\bibitem{Ta}
Fumio Takemoto.
\newblock Stable vector bundles on algebraic surfaces.
\newblock {\em Nagoya Math. J.}, 47:29--48, 1972.

\bibitem{Ti1}
Gang Tian.
\newblock On a set of polarized {K}\"ahler metrics on algebraic manifolds.
\newblock {\em J. Differential Geom.}, 32(1):99--130, 1990.

\bibitem{W1}
Xiaowei Wang.
\newblock Balance point and stability of vector bundles over a projective
  manifold.
\newblock {\em Math. Res. Lett.}, 9(2-3):393--411, 2002.

\bibitem{W2}
Xiaowei Wang.
\newblock Canonical metrics on stable vector bundles.
\newblock {\em Comm. Anal. Geom.}, 13(2):253--285, 2005.

\bibitem{Yau2}
Shing-Tung Yau.
\newblock Nonlinear analysis in geometry.
\newblock {\em Enseign. Math. S\'erie des Conf\'erences de l'Union
  Math\'ematique Internationale, 8}, 33(1-2):109--158, 1986.

\bibitem{Ze}
Steve Zelditch.
\newblock Asymptotics of holomorphic sections of powers of a positive line
  bundle.
\newblock In {\em S\'eminaire sur les \'Equations aux D\'eriv\'ees Partielles,
  1997--1998}, pages Exp. No. XXII, 12. \'Ecole Polytech., Palaiseau, 1998.

\bibitem{Zh}
Shouwu Zhang.
\newblock Heights and reductions of semi-stable varieties.
\newblock {\em Compositio Math.}, 104(1):77--105, 1996.

\end{thebibliography}

\end{document}